\newtheorem{thm}{Theorem}[section]
  \theoremstyle{definition}
  \newtheorem{example}[thm]{Example}
  \theoremstyle{plain}
  \newtheorem*{question*}{Question}
  \theoremstyle{definition}
  \newtheorem{defn}[thm]{Definition}
  \theoremstyle{plain}
  \newtheorem{lem}[thm]{Lemma}
  \theoremstyle{plain}
  \newtheorem{prop}[thm]{Proposition}
  \theoremstyle{plain}
  \newtheorem{cor}[thm]{Corollary}
  \theoremstyle{plain}
  \newtheorem{question}[thm]{Question}
  \theoremstyle{plain}
  \newtheorem{conjecture}[thm]{Conjecture}
  \theoremstyle{plain}
  \newtheorem{problem}[thm]{Problem}
\newtheorem*{thmaux}{Theorem \theoremauxnum}
\newtheorem*{coraux}{Corollary \theoremauxnum}
\newtheorem*{propaux}{Proposition \theoremauxnum}
\gdef\theoremauxnum{1}
\newenvironment{thmff}[2]{%
  \def\theoremauxnum{\ref{#2}}
  \begin{thmaux}[#1]
}{%
  \end{thmaux}
}
\newenvironment{corff}[2]{%
  \def\theoremauxnum{\ref{#2}}
  \begin{coraux}[#1]
}{%
  \end{coraux}
}
\newcommand{\amsdoc}{amsart}
\newcommand{\sep}{, }
\begin{document}
	\global\long\def\norm#1{\left\Vert #1\right\Vert }%
\global\long\def\abs#1{\left|#1\right|}%
\global\long\def\set#1#2{\left\{  \vphantom{#1}\vphantom{#2}#1\right.\left|\ #2\vphantom{#1}\vphantom{#2}\right\}  }%
\global\long\def\sphere#1{\mathbf{S}_{#1}}%
\global\long\def\closedball#1{\mathbf{B}_{#1}}%
\global\long\def\openball#1{\mathbb{B}_{#1}}%
\global\long\def\duality#1#2{\left\langle \vphantom{#1}\vphantom{#2}#1\right.\left|\ #2\vphantom{#1}\vphantom{#2}\right\rangle }%
\global\long\def\parenth#1{\left(#1\right)}%
\global\long\def\curly#1{\left\{  #1\right\}  }%
\global\long\def\blockbraces#1{\left[#1\right] }%
\global\long\def\span{\textup{span}}%
\global\long\def\image{\textup{Im}}%
\global\long\def\support{\textup{supp}}%
\global\long\def\N{\mathbb{N}}%
\global\long\def\R{\mathbb{R}}%
\global\long\def\Rnonneg{\mathbb{R}_{\geq0}}%
\global\long\def\C{\mathbb{C}}%
\global\long\def\tocorr{\mathbb{\ \twoheadrightarrow\ }}%
	\global\long\def\zabreiko{Zabre\u{\i}ko}%
\global\long\def\ellinfty#1#2{\ell^{\infty}(#1,#2)}%
\global\long\def\ellone#1#2{\ell^{1}(#1,#2)}%
\global\long\def\c#1#2{\mathbf{c}(#1,#2)}%
\global\long\def\czero#1#2{\mathbf{c}_{0}(#1,#2)}%
\global\long\def\ellinftyomega#1{\ellinfty{\Omega}{#1}}%
\global\long\def\elloneomega#1{\ellone{\Omega}{#1}}%
\global\long\def\comega#1{\c{\Omega}{#1}}%
\global\long\def\czeroomega#1{\czero{\Omega}{#1}}%
\global\long\def\ellinftyN#1{\ellinfty{\N}{#1}}%
\global\long\def\elloneN#1{\ellone{\N}{#1}}%
\global\long\def\cN#1{\c{\N}{#1}}%
\global\long\def\czeroN#1{\czero{\N}{#1}}%
\global\long\def\conenorm#1{\left\llbracket #1\right\rrbracket }%
\global\long\def\jamesseq{\mathcal{P}}%
\global\long\def\finite#1{\mathcal{F}\parenth{#1}}%

	\newcommand{\Miek}{Miek Messerschmidt}

\newcommand{\MiekEmail}{mmesserschmidt@gmail.com}

\newcommand{\UPAddress}{Department of Mathematics and Applied Mathematics; University of Pretoria; Private~bag~X20 Hatfield; 0028 Pretoria; South Africa}

\newcommand{\NWUAddressLineBreaks}{%
Department of Mathematics and Applied Mathematics\\
University of Pretoria\\
Private bag X20 Hatfield\\ 
0028 Pretoria\\ 
South Africa}

\newcommand{\ClaudeLeonAck}{The author's research was funded by The Claude Leon Foundation.}

	\newcommand{\paperabstract}{%
A version of the classical Klee-And\^o Theorem states the following:
For every Banach space $X$, ordered by a closed generating cone $C\subseteq X$,
there exists some $\alpha>0$ so that, for every $x\in X$, there
exist $x^{\pm}\in C$ so that $x=x^{+}-x^{-}$ and $\norm{x^{+}}+\norm{x^{-}}\leq\alpha\norm x$.

The conclusion of the Klee-And\^o Theorem is what is known as a \emph{conormality}
property. 

We prove stronger and somewhat more general versions of the Klee-And\^o
Theorem for both conormality and coadditivity (a property that is
intimately related to conormality). A corollary to our result shows
that the functions $x\mapsto x^{\pm}$, as above, may be chosen to
be bounded, continuous, and positively homogeneous, with a similar
conclusion yielded for coadditivity. Furthermore, we show that the
Klee-And\^o Theorem generalizes beyond ordered Banach spaces to Banach
spaces endowed with arbitrary collections of cones. Proofs of our
Klee-And\^o Theorems are achieved through an Open Mapping Theorem
for cone-valued multi-functions/correspondences.

We very briefly discuss a potential further strengthening of The Klee-And\^o
Theorem beyond what is proven in this paper, and motivate a conjecture
that there exists a Banach space $X$, ordered by a closed generating
cone $C\subseteq X$, for which there exist no Lipschitz functions
$(\cdot)^{\pm}:X\to C$ satisfying $x=x^{+}-x^{-}$ for all $x\in X$.
}%
\newcommand{\MSECodesPrimary}{%
	46B20\sep 
	46A30 
}%
\newcommand{\MSECodesSecondary}{%
	46B40\sep 
	32A12 
}%
\newcommand{\paperkeywords}{%
ordered Banach space\sep\ %
conormality\sep\ %
coadditivity\sep\ %
open mapping theorem} %

	\author{\Miek}
	\address{\Miek; \UPAddress}
	\email{\MiekEmail}
	\thanks{\ClaudeLeonAck}

	\title[Strong Klee-And\^o Theorems]{Strong Klee-And\^o Theorems through an Open Mapping Theorem for cone-valued multi-functions}
	\begin{abstract}
		\paperabstract
	\end{abstract}

	\subjclass[2010]{\MSECodesPrimary (primary), and \MSECodesSecondary (secondary)}
	\maketitle

\section{Introduction}

Let $X$ be a Banach space and $\curly{C_{\omega}}_{\omega\in\Omega}$
a collection of closed cones in $X$, indexed by a set $\Omega$.
Consider the following two geometric properties\textbf{\emph{}}\footnote{Historically the properties ``conormality'' and ``coadditivity'' are
respectively called ``$\alpha$-generating'' and ``$\alpha$-directedness''.
Our reason for deviating is in favor of the mnemonic device connecting
``conormality'' and ``coadditivity'' to their dual properties ``normality''
and ``additivity'' (standard terms which we do not define or need
in this paper): Roughly, a space is normal (additive) if and only
if its dual is conormal (coadditive), and vice versa. The interested
reader is referred to \cite{MesserschmidtGeometricDuality} and \cite{MesserschmidtNormality}
which tries to reference the previously existing literature on such
dualities as completely as possible.} $\curly{C_{\omega}}_{\omega\in\Omega}$ could satisfy in $X$:
\begin{enumerate}
\item \textbf{\emph{Conormality: }}There exists a constant $\alpha>0$ such
that, for every $x\in X$, there exists a decomposition $x=\sum_{\omega\in\Omega}c_{\omega}$
with $\sum_{\omega\in\Omega}\norm{c_{\omega}}\leq\alpha\norm x$ and
$c_{\omega}\in C_{\omega}$ for all $\omega\in\Omega$.\medskip
\item \textbf{\emph{Coadditivity:}} For some normed subspace $Z$ of $X^{\Omega}$,
there exists a constant $\alpha>0$ such that, for every $\xi\in Z$,
there exists some $x\in\bigcap_{\omega\in\Omega}(\xi_{\omega}+C_{\omega})$
with $\norm x\leq\alpha\norm{\xi}$.
\end{enumerate}
The following two very simple examples are easily seen to be conormal
and coadditive respectively, and also illustrates that these properties
make sense even outside the realm of classical ordered Banach spaces.
\begin{example}
\label{conormality-example}For $\R^{2}$ with the Euclidean norm
with $\{e_{1},e_{2}\}$ the standard basis for $\R^{2}$, set $\Omega:=\{e_{1},e_{2},-(e_{1}+e_{2})\}\subseteq\R^{2}$,
and define the cones $\{C_{x}\}_{x\in\Omega}$ by setting $C_{x}:=\set{\lambda x}{\lambda\geq0}$
for all $x\in\Omega$. The space $\R^{2}$ with $\{C_{x}\}_{x\in\Omega}$
is easily seen to be conormal.
\end{example}

{}
\begin{example}
\label{coadditivity-example}
	\ifdefined\amsdoc %
		For $\R^{2}$ with the Euclidean norm, we will define the cones $C_{1}:=\set{(x,y)\in\R^{2}}{x,y\geq0}$
		and $C_{2}:=\set{\alpha(1,1)+\beta(1,-1)\in\R^{2}}{\alpha,\beta\geq0}$.
		The space $\R^{2}$ with $\{C_{j}\}_{j\in\{1,2\}}$ is easily seen
		to be coadditive (with $Z$ taken as the $\ell^{2}$-direct sum two
		copies of $\R^{2}$).
	\else 
		For $\R^{2}$ with the Euclidean norm,
		we define the cones $C_{1}:=\set{(x,y)\in\R^{2}}{x,y\geq0}$ and $C_{2}:=\set{\alpha(1,1)+\beta(1,-1)\in\R^{2}}{\alpha,\beta\geq0}$.
		The space $\R^{2}$ with $\{C_{j}\}_{j\in\{1,2\}}$ is easily seen
		to be coadditive (with $Z$ taken as the $\ell^{2}$-direct sum two
		copies of $\R^{2}$).%
	\fi 
\end{example}

\medskip

The following result dates back to And\^o's \cite[Lemma~1]{Ando},
which was proven by employing an Open Mapping Theorem due to Klee
\cite[(3.2)]{Klee}. The reader should recognize the conclusion of
the following result as a conormality property.
\begin{thm}[{Klee-And\^o Theorem \cite[Lemma~1]{Ando}, \cite[(3.2)]{Klee}}]
\label{thm:classicalKleeAndo} Let $X$ be a Banach space ordered
by a closed cone $C\subseteq X$. If $C$ is generating, i.e., $X=C-C$,
then there exists a constant $\alpha>0$ so that, for every $x\in X$,
there exists a decomposition $x=x^{+}-x^{-}$ with $x^{\pm}\geq0$
and $\norm{x^{+}}+\norm{x^{-}}\leq\alpha\norm x$.
\end{thm}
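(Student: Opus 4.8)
The plan is to derive the conormality estimate from the Open Mapping Theorem via the standard two-step scheme: first establish an "approximate" decomposition with a loss factor $<1$, then iterate a geometric series to upgrade it to an exact decomposition with a controlled constant. Concretely, I would consider the map $T\colon C\times C\to X$ given by $T(c^{+},c^{-}):=c^{+}-c^{-}$. Here the domain $C\times C$ is not a vector space, so rather than literally invoking the classical Banach Open Mapping Theorem I would work with the convex set $D_{n}:=\{c^{+}-c^{-}:c^{\pm}\in C,\ \norm{c^{+}}+\norm{c^{-}}\leq n\}$, which is symmetric ($D_{n}=-D_{n}$) and convex, and whose union over $n\in\N$ is all of $X$ by the hypothesis $X=C-C$. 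Since $X$ is a complete metric space, Baire's category theorem gives some $n_{0}$ for which $\overline{D_{n_{0}}}$ has nonempty interior; by symmetry and convexity $\overline{D_{n_{0}}}$ contains a ball $\closedball{}$ around $0$, say $\closedball{}(0,r)\subseteq\overline{D_{n_{0}}}$. Rescaling, for every $x\in X$ we can find $c^{\pm}\in C$ with $x$ approximated by $c^{+}-c^{-}$ to within, say, $\tfrac12\norm x$ in norm, while $\norm{c^{+}}+\norm{c^{-}}\leq\tfrac{2n_{0}}{r}\norm x$.

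The second step is the completeness/iteration argument, exactly as in the proof of the Open Mapping Theorem. Given $x\in X$, set $x_{0}:=x$ and inductively choose $c_{k}^{\pm}\in C$ with $\norm{c_{k}^{+}}+\norm{c_{k}^{-}}\leq M 2^{-k}\norm x$ (where $M:=2n_{0}/r$) and $\norm{x_{k}-(c_{k}^{+}-c_{k}^{-})}\leq 2^{-(k+1)}\norm x$; put $x_{k+1}:=x_{k}-(c_{k}^{+}-c_{k}^{-})$. Because $C$ is closed and $X$ is complete, the series $x^{\pm}:=\sum_{k\geq0}c_{k}^{\pm}$ converge in $X$ and their sums lie in $C$. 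A telescoping computation shows $x=x^{+}-x^{-}$, and summing the norm bounds gives $\norm{x^{+}}+\norm{x^{-}}\leq\sum_{k\geq0}M2^{-k}\norm x=2M\norm x$, so the theorem holds with $\alpha:=2M=4n_{0}/r$.

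The main obstacle — and the only place where genuine care is needed — is the first step, namely producing a ball inside $\overline{D_{n_{0}}}$ and handling the fact that the "pieces" live in the cone $C$ rather than in a linear subspace. One must check that $D_{n}$ is convex (which uses that $C$ is a cone, so positive combinations of elements of $C$ stay in $C$, together with the triangle inequality for the norm bound) and symmetric, and that closedness of $C$ is what lets the limiting elements $x^{\pm}$ remain in $C$ after the iteration. I would remark that this argument is essentially Klee's Open Mapping Theorem \cite[(3.2)]{Klee} specialized to this setting, and that the later sections of the paper subsume it; but since the classical statement is the historical anchor, it is worth recording the direct Baire-category proof here. No estimate beyond the geometric series $\sum 2^{-k}=2$ is required, so the constant $\alpha$ is explicit in terms of the single Baire-category datum $(n_{0},r)$.
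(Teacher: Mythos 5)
Your argument is correct, and it is essentially the classical direct proof rather than the route this paper takes. The paper never proves Theorem~\ref{thm:classicalKleeAndo} from scratch: it is recorded as a cited historical result, and its content is recovered as the special case $\Omega=\{+,-\}$, $C_{+}:=C$, $C_{-}:=-C$ of the implication (1)$\Rightarrow$(2) in Corollary~\ref{cor:klee-ando-for-conorm}. That implication is obtained by applying the Open Mapping Theorem for cone-valued correspondences (Theorem~\ref{thm:openmappingtheorem}) to the summation map $\Sigma$ on the complete metric cone $\bigoplus_{\omega}C_{\omega}\subseteq\elloneomega X$, and Theorem~\ref{thm:openmappingtheorem} is in turn proved by showing that $q(y):=\rho(y)\vee\rho(-y)$, with $\rho(y)=\inf\set{\conenorm c}{y\in\Psi(c)}$, is a countably subadditive seminorm and invoking \zabreiko's Lemma. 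Your proof instead runs Baire category directly on the symmetric convex sets $D_{n}$ and then performs the geometric-series iteration by hand; this is sound (the convexity and symmetry of $D_{n}$, the passage from a ball about some point to a ball about $0$, and the closedness of $C$ guaranteeing $x^{\pm}\in C$ in the limit are all handled correctly), and it has the virtue of being self-contained with an explicit constant $\alpha=4n_{0}/r$. What it does not buy is what the paper's machinery is built for: the \zabreiko-style formulation isolates the ``Baire plus absolutely convergent series'' scheme into a reusable lemma that works for arbitrary indexed families of cones, for the coadditivity statement, and---crucially---sets up the $\alpha$-boundedness in the form needed to feed Proposition~\ref{prop:additive-bounded-is-lower-hemicont} and Michael's Selection Theorem, yielding the continuous positively homogeneous selections of Corollaries~\ref{cor:klee-ando-for-coadd} and~\ref{cor:klee-ando-for-conorm}, which your pointwise construction cannot produce. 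For the classical statement as posed, your proof is complete and correct.
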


Recently, a stronger version of the Klee-And\^o Theorem, stated as
Corollary~\ref{cor:klee-ando-for-conorm} below, was proven by the
author and de Jeu in \cite{deJeuMesserschmidtOpenMapping}. Its proof
employed a generalization of Banach's classical Open Mapping Theorem
\cite[Theorem~3.2]{deJeuMesserschmidtOpenMapping} together with Michael's
Selection Theorem which stated in this paper as Theorem~\ref{thm:Michael's-Selection-Theorem}.
We note that Corollary~\ref{cor:klee-ando-for-conorm} has wider
applicability than ordered Banach spaces (cf. Example~\ref{conormality-example})
and has a distinctly more geometrical flavor than the order theoretic
Theorem~\ref{thm:classicalKleeAndo} (where we essentially restrict
our attention to only two cones: $C$ and $-C$).

\begin{corff}{Strong Klee-And\^o Theorem for conormality {\cite[Theorem~4.1]{deJeuMesserschmidtOpenMapping}}}{cor:klee-ando-for-conorm}

Let $X$ be a Banach space and $\curly{C_{\omega}}_{\omega\in\Omega}$
an indexed collection of closed cones in $X$. The following are equivalent:
\begin{enumerate}
\item \label{decomp}For every $x\in X$, there exists a decomposition $x=\sum_{\omega\in\Omega}c_{\omega}$,
with $c_{\omega}\in C_{\omega}$ for every $\omega\in\Omega$, and
satisfying $\sum_{\omega\in\Omega}\norm{c_{\omega}}<\infty$.\medskip
\item \label{conormality--in--theorem}There exists an $\alpha>0$ such
that, for every $x\in X$, there exists a decomposition $x=\sum_{\omega\in\Omega}c_{\omega}$,
with $c_{\omega}\in C_{\omega}$ for every $\omega\in\Omega$, and
satisfying $\sum_{\omega\in\Omega}\norm{c_{\omega}}\leq\alpha\norm x$.\medskip
\item There exists an $\alpha>0$ and, for every $\omega\in\Omega$, there
exists a continuous positively homogeneous map $\delta_{\omega}:X\to C_{\omega}$
such that, for every $x\in X$, we have $x=\sum_{\omega\in\Omega}\delta_{\omega}(x)$
and $\sum_{\omega\in\Omega}\norm{\delta_{\omega}(x)}\leq\alpha\norm x$.
\end{enumerate}
\end{corff}

In other words, the mere fact that one can decompose arbitrary elements
of $X$ as the limit of absolutely convergent series with terms chosen
from the closed cones $\{C_{\omega}\}_{\omega\in\Omega}$, automatically
implies that one can always choose such a decomposition in a bounded,
continuous and positively homogeneous and manner. This is, of course,
particularly useful when considering spaces of continuous functions
taking values in an ordered Banach space (cf. \cite[Corollary~2.8]{CPIII}).

Having now discussed the Strong Klee-And\^o Theorem for conormality,
we point out that coadditivity and conormality are intimately related,
in that both their statements satisfy the following general template: 
\begin{quote}
For every structure $A$ of a certain type, there exists some related
structure $B$, and this $B$ is bounded, in some sense, by $A$.
\end{quote}
This relationship and the fact that the Klee-And\^o Theorem for conormality
was previously established in \cite{deJeuMesserschmidtOpenMapping},
raises the following question:
\begin{question}
Does there there exist a Klee-And\^o type theorem for coadditivity?
Roughly, if the intersection of certain translates of a collection
of closed cones in a Banach space is always non-empty, can one always
find an element in such an intersection in a bounded, continuous and
positively homogeneous manner?
\end{question}

We will answer this question positively in this paper by proving Corollary~\ref{cor:klee-ando-for-coadd}.
\begin{corff}{Strong Klee-And\^o Theorem for coadditivity}{cor:klee-ando-for-coadd}

\label{thm:coadditivity} Let $X$ be a Banach space and $\{C_{\omega}\}_{\omega\in\Omega}$
an indexed collection of closed cones in $X$. Let $Z$ be either
of the spaces $\comega X$ or $\ellinftyomega X$. Then the following
are equivalent:
\begin{enumerate}
\item For every $\xi\in Z$, the intersection $\bigcap_{\omega\in\Omega}(\xi_{\omega}+C_{\omega})$
is non-empty.\medskip
\item \label{coadditivity-in-theorem}There exists an $\alpha>0$ such that,
for every $\xi\in Z$, there exists some ${y\in\bigcap_{\omega\in\Omega}(\xi_{\omega}+C_{\omega})}$
with $\norm y\leq\alpha\norm{\xi}_{\infty}$.\medskip
\item There exists an $\alpha>0$ and a continuous positively homogeneous
map $\upsilon:Z\to X$ such that, for every $\xi\in Z$, we have $\upsilon(\xi)\in\bigcap_{\omega\in\Omega}(\xi_{\omega}+C_{\omega})$
and $\norm{\upsilon(\xi)}\leq\alpha\norm{\xi}_{\infty}$.
\end{enumerate}
\end{corff}

Our approach in proving Corollary~\ref{cor:klee-ando-for-coadd}
is similar in broad strokes to the proof of \cite[Theorem~4.1]{deJeuMesserschmidtOpenMapping},
stated in this paper as Corollary~\ref{cor:klee-ando-for-conorm}.
However, the Open Mapping Theorem \cite[Theorem 3.2]{deJeuMesserschmidtOpenMapping}
employed in the proof of \cite[Theorem~4.1]{deJeuMesserschmidtOpenMapping}
is not strong enough to establish the results leading up to Corollary~\ref{cor:klee-ando-for-coadd}.
We further strengthen \cite[Theorem~3.2]{deJeuMesserschmidtOpenMapping}
to the version stated as Theorem~\ref{thm:openmappingtheorem}, which
we use to establish both our Strong Klee-And\^o Theorems for both
coadditivity and conormality:

\begin{thmff}{Open Mapping Theorem for cone-valued correspondences}{thm:openmappingtheorem}

Let $C$ a complete metric cone (as defined in Section~\ref{sec:metric-cones-and-open-mapping-thm}),
$Y$ a Banach space and $D\subseteq Y$ a closed cone. Let $T:C\to Y$
be a continuous additive positively homogeneous map. If the correspondence
$\Psi:C\tocorr Y$ defined by $\Psi(c):=Tc+D\ (x\in C)$ is surjective
(in the sense that, for every $y\in Y$, there exists some $c\in C$
such that $y\in\Psi(c)$), then the image under $\Psi$ of the open
unit ball about zero in $C$ is open in $Y$.

\end{thmff}

\medskip

We point out the following problem toward further strengthening of
the Klee-And\^o Theorems proven in this paper. This problem is motivated
by the observation that the lattice operations in every Banach lattice
are Lipschitz \cite[Proposition II.5.2]{Schaefer}, and the further
question as to whether a form of this carries over to ordered Banach
spaces, or more generally, to Banach spaces endowed with arbitrary
collections of cones. To the author's knowledge, this problem remains
open:
\begin{problem}
\label{que:works_for_lip?}Do Corollaries \ref{cor:klee-ando-for-coadd}
and \ref{cor:klee-ando-for-conorm} remain true when, in their statements,
the word ``continuous'' is replaced by the word ``Lipschitz''?
\end{problem}

A straightforward application of \cite[Corollary~4.6]{MesserschmidtPtwiseLipSelpub}
shows that Corollaries \ref{cor:klee-ando-for-coadd} and~\ref{cor:klee-ando-for-conorm}
remain true when, in their statements, the word ``continuous'' is
replaced with the phrase ``continuous and pointwise Lipschitz on a
dense set of its domain''. However, an example devised by Aharoni
and Lindenstrauss (cf.~\cite{LindenstraussAharoni} and~\cite[Example~1.20]{LindenstraussBenyamini})
shows that \cite[Corollary~4.6]{MesserschmidtPtwiseLipSelpub} cannot
be improved, in general, to a result yielding functions that are Lipschitz.
We refer the reader to \cite{MesserschmidtPtwiseLipSelpub} for more
detail. 

The above-mentioned example by Aharoni and Lindenstrauss and the difficulties
presented in attempts at solving Problem~\ref{que:works_for_lip?}
positively, motivates the following conjecture which, if true, is
sufficient to solve Problem~\ref{que:works_for_lip?} negatively:
\begin{conjecture}
There exists a Banach space $X$, ordered by a closed generating cone
$C\subseteq X$, for which there exist no Lipschitz functions $(\cdot)^{\pm}:X\to C$
satisfying $x=x^{+}-x^{-}$ for all $x\in X$.
\end{conjecture}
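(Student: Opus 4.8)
The plan is to reduce the conjecture to a non-existence statement for Lipschitz selections, and then to transplant into the ordered-Banach-space setting the Aharoni--Lindenstrauss phenomenon referenced above. Note first that, when $X$ is ordered by a closed generating cone $C$, applying Corollary~\ref{cor:klee-ando-for-conorm} to the collection $\{C,-C\}$ already produces continuous, positively homogeneous maps $(\cdot)^{\pm}:X\to C$ with $x=x^{+}-x^{-}$ for all $x$; the conjecture asserts that for some $X$ \emph{no} such maps can be taken Lipschitz. Reformulating: writing $p:=(\cdot)^{+}$, a pair $(\cdot)^{\pm}$ as above is precisely a map $p:X\to X$ with $p(x)\in C$ and $p(x)-x\in C$ for every $x$, that is, a selection of the correspondence $\Phi:X\tocorr X$, $\Phi(x):=C\cap(x+C)$; and since $(\cdot)^{-}=p-\mathrm{id}_{X}$, the map $p$ is Lipschitz if and only if both of $(\cdot)^{\pm}$ are. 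The values $\Phi(x)$ are closed and convex, and are nonempty for every $x$ exactly because $C$ is generating. Hence the conjecture is equivalent to the assertion that there exist a Banach space $X$ and a closed generating cone $C\subseteq X$ such that $\Phi(x)=C\cap(x+C)$ admits no Lipschitz selection. Finally, since $\Phi$ is positively homogeneous it is enough to obstruct Lipschitz selections along one closed affine hyperplane $H:=\{x\in X:f(x)=1\}$ not containing the origin (with $f\in X^{*}$): the restriction to $H$ of a globally Lipschitz selection of $\Phi$ would be a Lipschitz selection of $\Phi|_{H}:H\tocorr X$, so it suffices to arrange that $\Phi|_{H}$ has none.

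I would then invoke, from the Aharoni--Lindenstrauss circle of ideas (cf.\ the discussion above and \cite[Example~1.20]{LindenstraussBenyamini}), a Lipschitz correspondence $G$ with closed convex values, defined on (a convex subset $M$ of) a Banach space $F$ and with values in a Banach space $E$, which admits a continuous selection but no Lipschitz one, and try to realize a suitable affine image of $G$ as $\Phi|_{H}$. The most natural candidate is a ``cone over a convex body'': take $X:=\R\times F\times E$, let $H:=\{1\}\times F\times E$, and let $C$ be the closed convex cone generated by $\{(1,w,e):w\in M,\ e\in G(w)\}$ together with whatever recession directions this forces, so that the ``$t=1$''-slice of $C$ is (the closed convex hull of) the graph of $G$. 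One then analyses $\Phi|_{H}$ and hopes to recover $G$ --- or at least a correspondence still exhibiting the failure of Lipschitz selection --- whence a Lipschitz $(\cdot)^{+}$ on $X$ would contradict the chosen example.

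Two things must then be verified, and the second is where I expect the main obstacle to lie. First, one must ensure that the $C$ so constructed is a closed cone that is \emph{generating in all of} $X$: being generating is a global, two-sided condition, whereas the cone over the graph of $G$ is intrinsically one-sided (epigraph-like), so $C$ must be enlarged by further directions --- by a controlled symmetrization, or by embedding the construction inside a larger direct sum with an auxiliary Banach space --- without reintroducing Lipschitz selections. Second, and more seriously, one must control the set $C\cap(x+C)$ precisely. Since $C$ is convex, its slices, and hence the values of $\Phi$, are convex; moreover the intersection of two translated closed convex cones in infinite dimensions is typically ``fat'' --- every nonempty value $C\cap(x+C)$ contains an entire affine translate of the lineality space $L:=C\cap(-C)$, and $\Phi(0)=C$ --- so correspondences of this shape tend to admit Lipschitz selections by elementary gluing, whereas the Aharoni--Lindenstrauss obstruction exploits values that are ``thin''. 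Keeping $\Phi|_{H}$ thin enough to inherit the absence of a Lipschitz selection, while keeping $C$ fat enough to generate $X$, is precisely the tension that, to the author's knowledge, has so far blocked a proof. A complementary line of attack that might bypass the explicit construction is to argue via retractions: to seek $X$ and $C$ whose lineality space $L=C\cap(-C)$ is a closed subspace not Lipschitz-complemented in $X$, and for which a Lipschitz $(\cdot)^{+}$ would nonetheless force a Lipschitz section of the quotient map $X\to X/L$ --- but ruling out the possibility that such a selection ``escapes'' into the larger parts of $C\cap(x+C)$ is, once again, the crux, and is the step I expect to demand genuinely new ideas.
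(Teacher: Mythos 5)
This statement is a \emph{conjecture} in the paper: the author explicitly leaves it open (see Problem~\ref{que:works_for_lip?} and the surrounding discussion), so there is no proof in the paper to compare against, and your text does not supply one either. What you have written is a reduction plus a research programme, and you candidly flag the two places where it is incomplete. The reduction itself is sound: a pair $(\cdot)^{\pm}:X\to C$ with $x=x^{+}-x^{-}$ is the same thing as a selection $p$ of the correspondence $\Phi(x):=C\cap(x+C)$, with $p$ Lipschitz exactly when both maps are, and a Lipschitz selection restricts to a Lipschitz selection of $\Phi|_{H}$ on any affine hyperplane $H$ (the appeal to positive homogeneity of $\Phi$ is unnecessary for this step, but harmless). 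So the conjecture is indeed equivalent to exhibiting a closed generating cone whose correspondence $\Phi$ admits no Lipschitz selection.

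The genuine gap is that the crucial object is never constructed. The Aharoni--Lindenstrauss example cited in the paper obstructs Lipschitz selections for a particular quotient-type correspondence; it is not itself of the form $x\mapsto C\cap(x+C)$ for a closed \emph{generating} cone $C$, and your proposed ``cone over the graph'' construction is exactly where the known difficulty sits. As you note yourself, there is a tension you do not resolve: enlarging $C$ so that $C-C=X$ tends to fatten the slices $C\cap(x+C)$ (every value contains translates coming from the added directions, and $\Phi(0)=C$), and fat convex values typically do admit Lipschitz selections, destroying the obstruction you imported; keeping the slices thin tends to destroy generation. The alternative route via a non-Lipschitz-complemented lineality space has the same unresolved step, which you also acknowledge (``escape'' of the selection into the larger part of $C\cap(x+C)$). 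Since neither verification is carried out, the proposal does not prove the conjecture; it is a plausible strategy whose missing ingredient is precisely the open problem the paper records, so it should be presented as motivation or a partial reduction, not as a proof.
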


\medskip

We briefly describe the structure of the paper.

\medskip

In Section~\ref{sec:preliminaries} we provide some preliminary definitions
and notation used throughout the current paper.

Sections~\ref{sec:bounded-and-additive-correspondences} and~\ref{sec:metric-cones-and-open-mapping-thm}
sees the introduction of some terminology on correspondences (also
known as multi-functions) and will prove some general results. Section~\ref{sec:bounded-and-additive-correspondences}
will introduce what we call ``additive'' and ``$\alpha$-bounded''
correspondences and will establish some general results that we will
use in later sections. Section~\ref{sec:metric-cones-and-open-mapping-thm}
sees the definition of metric cones and proof of one of our main results,
an Open Mapping Theorem for cone-valued correspondences (Theorem~\ref{thm:openmappingtheorem}).

We apply our results from the previous sections to prove our Strong
Klee-And\^o Theorems for conormality and coadditivity in Section~\ref{sec:strong-klee-ando-theorem}.
Although our Strong Klee-And\^o Theorem for conormality was previously
established in \cite{deJeuMesserschmidtOpenMapping}, we also include
a proof of it here for the sake of completeness.

\section{Preliminary definitions and notation\label{sec:preliminaries}}

All vector spaces are assumed to be over the reals. 

Let $V$ be a vector space. A non-empty subset $C\subseteq V$ satisfying
$C+C\subseteq C$ and $\lambda C\subseteq C$ for all $\lambda\geq0$
will be called a \emph{cone}. If $C\cap-C=\{0\}$, then we will say
$C$ is a \emph{proper cone. }If $V=C-C$, then we will say that $C$
is \emph{generating} in $V$.\emph{ }Translation invariant and positively
homogeneous pre-orders (partial orders) on $V$ are easily seen to
be in bijection with cones (proper cones) in $V$, cf. \cite[Section~1.1]{AliprantisTourky}.
We will say $V$ \emph{is ordered by a cone }$C$ by defining ``$v\leq w$''
to mean $w\in v+C$ for $v,w\in V$. 

Let $\phi:V\to\R$ be any map. The map $\phi$ is said to be \emph{positively
homogeneous,} if $\phi(\lambda v)=\lambda\phi(v)$ for all $v\in V$
and $\lambda\geq0$. The map $\phi$ is said to be\emph{ subadditive}
if $\phi(v+w)\leq\phi(v)+\phi(w)$ for all $v,w\in V$. 

For a normed space $X$, we will denote the unit sphere, closed unit
ball and open unit ball of $X$ respectively by $\sphere X$, $\closedball X$
and $\openball X$.

Let $\Omega$ be an arbitrary index set, and let $X$ be a Banach
space. 
\begin{enumerate}
\item For $1\leq p\leq\infty$, by $\ell^{p}(\Omega,X)$ we will denote
the usual $\ell^{p}$-direct sum of $\abs{\Omega}$ copies of $X$,
with the norm on $\ell^{p}(\Omega,X)$ denoted by $\norm{\cdot}_{p}$. 
\item By $\comega X$ we will denote the closed subspace of $\ellinftyomega X$
of all elements $\xi\in\ellinftyomega X$ for which there exists some
$x\in X$ such that, for every $\varepsilon>0$, the set $\set{\omega\in\Omega}{\norm{\xi_{\omega}-x}\geq\varepsilon}$
is finite. 
\end{enumerate}
Let $\curly{C_{\omega}}_{\omega\in\Omega}$ be an indexed collection
of cones in $X$, and let $Z$ be some vector subspace of $X^{\Omega}$.
The notation ``$\bigoplus_{\omega\in\Omega}C_{\omega}\subseteq Z$''
will be used to denote the set $\set{\xi\in Z}{\forall\omega\in\Omega,~\xi_{\omega}\in C_{\omega}}$.

\section{Bounded and additive correspondences\label{sec:bounded-and-additive-correspondences}}

In this section our goal is to prove the general result, Proposition~\ref{prop:additive-bounded-is-lower-hemicont},
which establishes the lower hemicontinuity of certain correspondences
constructed from given correspondences having some extra algebraic
structure.\medskip

We first introduce some terminology: Let $A,B$ be sets. By a \emph{correspondence}
we mean a set valued map $\varphi:A\to2^{B}$ and will use the notation
$\varphi:A\tocorr B$. If $A$ and $B$ are topological spaces, we
will say that a correspondence $\varphi:A\tocorr B$ is \emph{lower
hemicontinuous} if, for every $a\in A$ and every open set $U\subseteq B$
satisfying $\varphi(a)\cap U\neq\emptyset$, there exists some open
set $V\ni a$ satisfying $\varphi(v)\cap U\neq\emptyset$ for all
$v\in V$. By a \emph{continuous selection of $\varphi$ }we mean
a continuous function $f:A\to B$ with $f(a)\in\varphi(a)$ for all
$a\in A.$ 
\begin{defn}
Let $X$ and $Y$ be normed spaces and let $\varphi:X\tocorr Y$ be
a correspondence.

\begin{enumerate}
\item We will say $\varphi$ is \emph{additive} if, for $x,z\in X$, $\varphi(x)+\varphi(z)\subseteq\varphi(x+z)$.\medskip
\item For some $\alpha>0$, we will say $\varphi$ is $\alpha$\emph{-bounded}
if, for every $x\in X$ and $\varepsilon>0$, $\varphi(x)\cap(\alpha+\varepsilon)\norm x\closedball Y$
is non-empty. 
\end{enumerate}
\end{defn}

We begin with the following lemma, which is a crucial ingredient in
the proof of Proposition~\ref{prop:additive-bounded-is-lower-hemicont}.
\begin{lem}
\label{lem:convex-intersection-withball-nonempty}Let $X$ be a normed
space. Let $\alpha>0$ and $G\subseteq X$ be a convex set such that,
for every $\varepsilon>0$, the set $G\cap(\alpha+\varepsilon)\closedball X$
is non-empty. If, for an open set $U\subseteq X$ and some $\varepsilon_{0}>0$,
the set $G\cap(\alpha+\varepsilon_{0})\closedball X\cap U$ is non-empty,
then $G\cap(\alpha+\varepsilon_{0})\openball X\cap U$ is also non-empty.
\end{lem}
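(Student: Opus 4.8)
The plan is to take the point we are handed in $G\cap(\alpha+\varepsilon_0)\closedball X\cap U$ and nudge it slightly toward a point of small norm, using convexity of $G$ and the fact that $G$ meets balls of radius arbitrarily close to $\alpha$. Concretely, let $x_0\in G\cap(\alpha+\varepsilon_0)\closedball X\cap U$ be the given point. Pick a small $\delta>0$ with $0<\delta<\varepsilon_0$ and, using the hypothesis, choose $g_\delta\in G\cap(\alpha+\delta)\closedball X$. For $t\in[0,1]$ set $x_t:=(1-t)x_0+t g_\delta$; by convexity $x_t\in G$ for all such $t$. The idea is that for small $t>0$ the point $x_t$ still lies in $U$ (since $U$ is open and $x_0\in U$), while its norm is strictly less than $\alpha+\varepsilon_0$ because we are moving in the direction of a point whose norm is at most $\alpha+\delta<\alpha+\varepsilon_0$.

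First I would make the norm estimate precise: $\norm{x_t}\le(1-t)\norm{x_0}+t\norm{g_\delta}\le(1-t)(\alpha+\varepsilon_0)+t(\alpha+\delta)=(\alpha+\varepsilon_0)-t(\varepsilon_0-\delta)$, which is strictly less than $\alpha+\varepsilon_0$ for every $t\in(0,1]$ since $\varepsilon_0-\delta>0$. So for any $t\in(0,1]$ we already have $x_t\in G\cap(\alpha+\varepsilon_0)\openball X$. Next, since $U$ is open and $x_0=x_0\in U$, and $t\mapsto x_t$ is continuous (indeed affine) with $x_0$ at $t=0$, there exists $t_0\in(0,1]$ small enough that $x_{t_0}\in U$. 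Then $x_{t_0}$ witnesses that $G\cap(\alpha+\varepsilon_0)\openball X\cap U$ is non-empty, which is exactly the claim.

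There is really only one point requiring a moment's care, and it is the mildest possible obstacle: one must invoke the hypothesis ``$G\cap(\alpha+\varepsilon)\closedball X$ is non-empty for every $\varepsilon>0$'' with an $\varepsilon$ strictly smaller than $\varepsilon_0$ (here $\delta$), so that the convex combination strictly decreases the norm bound; taking $\delta=\varepsilon_0$ would not suffice. Everything else — convexity of $G$ giving $x_t\in G$, the triangle-inequality norm bound, and openness of $U$ giving the wiggle room for small $t$ — is routine. I would also remark that no completeness or closedness of $G$ is used; the statement is purely a convexity-plus-openness observation, which is why it slots in as the elementary geometric ingredient feeding Proposition~\ref{prop:additive-bounded-is-lower-hemicont}.
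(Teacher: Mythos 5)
Your proof is correct and follows essentially the same argument as the paper: choose an auxiliary point of $G$ with norm at most $\alpha+\delta$ for some $\delta<\varepsilon_0$ (the paper takes $\delta=\varepsilon_0/2$), form the convex combination, bound its norm by the triangle inequality, and use continuity of the segment together with openness of $U$ to find a suitable $t_0>0$. The only cosmetic difference is that the paper first dispenses with the trivial case $x_0\in(\alpha+\varepsilon_0)\openball X$, which your uniform estimate renders unnecessary.
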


\begin{proof}
Let $U\subseteq X$ be open and $\varepsilon_{0}>0$ such that $G\cap(\alpha+\varepsilon_{0})\closedball X\cap U\neq\emptyset$.
Let $x\in G\cap(\alpha+\varepsilon_{0})\closedball X\cap U$. If $x\in(\alpha+\varepsilon_{0})\openball X$,
then we are done. We therefore assume that $x\in(\alpha+\varepsilon_{0})\sphere X$.
Let $y\in G\cap(\alpha+2^{-1}\varepsilon_{0})\closedball X\neq\emptyset$.
Then, for every $t\in(0,1]$, 
\begin{eqnarray*}
\norm{ty+(1-t)x} & \leq & t\norm y+(1-t)\norm x\\
 & \leq & t(\alpha+2^{-1}\varepsilon_{0})+(1-t)(\alpha+\varepsilon_{0})\\
 & < & t(\alpha+\varepsilon_{0})+(1-t)(\alpha+\varepsilon_{0})\\
 & = & (\alpha+\varepsilon_{0}).
\end{eqnarray*}
In other words, $ty+(1-t)x\in(\alpha+\varepsilon_{0})\openball X$
for all $t\in(0,1]$. Since $[0,1]\ni t\mapsto ty+(1-t)x$ is continuous,
there exists some $t_{0}\in(0,1]$ such that $t_{0}y+(1-t_{0})x\in U$.
Since $G$ is convex, $t_{0}y+(1-t_{0})x\in G$. We conclude $t_{0}y+(1-t_{0})x\in G\cap(\alpha+\varepsilon_{0})\openball X\cap U$.
\end{proof}
\begin{prop}
\label{prop:additive-bounded-is-lower-hemicont}Let $X$ and $Y$
be normed spaces and $\alpha>0$. Let $\varphi:X\tocorr Y$ be a convex-valued
additive $\alpha$-bounded correspondence. Then, for every $\varepsilon>0$,
the correspondence $\varphi_{\varepsilon}:\sphere X\tocorr Y$, defined
by 
\[
\varphi_{\varepsilon}(x):=\varphi(x)\cap(\alpha+\varepsilon)\closedball Y\quad(x\in\sphere X),
\]
is non-empty\textendash{} and convex-valued and lower hemicontinuous.
\end{prop}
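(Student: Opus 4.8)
The plan is to verify the three asserted properties of $\varphi_{\varepsilon}$ separately; the first two are immediate, and lower hemicontinuity is where the work lies and where Lemma~\ref{lem:convex-intersection-withball-nonempty} enters.

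First I would dispatch non-emptiness and convexity. Fix $\varepsilon>0$ and $x\in\sphere X$, so $\norm x=1$; then $\alpha$-boundedness gives that $\varphi(x)\cap(\alpha+\varepsilon)\norm x\closedball Y=\varphi(x)\cap(\alpha+\varepsilon)\closedball Y=\varphi_{\varepsilon}(x)$ is non-empty, and this set is convex as an intersection of two convex sets.

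For lower hemicontinuity, fix $x\in\sphere X$ and an open $U\subseteq Y$ with $\varphi_{\varepsilon}(x)\cap U\neq\emptyset$. The key move is not to work with a point of $\varphi_{\varepsilon}(x)\cap U$ directly, but first to apply Lemma~\ref{lem:convex-intersection-withball-nonempty} to the convex set $G:=\varphi(x)$: since $\norm x=1$, $\alpha$-boundedness makes $G\cap(\alpha+\varepsilon')\closedball Y$ non-empty for every $\varepsilon'>0$, and by hypothesis $G\cap(\alpha+\varepsilon)\closedball Y\cap U\neq\emptyset$, so the lemma produces a point $y\in\varphi(x)\cap(\alpha+\varepsilon)\openball Y\cap U$. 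This buys strict slack: put $\delta:=\alpha+\varepsilon-\norm y>0$, choose $r>0$ with $y+r\closedball Y\subseteq U$, set $\rho:=\min\{\delta,r\}/(\alpha+1)>0$, and let $V:=\set{v\in\sphere X}{\norm{v-x}<\rho}$, an open neighbourhood of $x$ in $\sphere X$. For $v\in V$, $\alpha$-boundedness (with auxiliary parameter $1$) applied to $v-x$ yields some $z\in\varphi(v-x)$ with $\norm z\leq(\alpha+1)\norm{v-x}<\min\{\delta,r\}$; additivity then gives $y+z\in\varphi(x)+\varphi(v-x)\subseteq\varphi(v)$, while $\norm{y+z}\leq\norm y+\norm z<\norm y+\delta=\alpha+\varepsilon$ places $y+z$ in $(\alpha+\varepsilon)\closedball Y$, and $\norm{(y+z)-y}=\norm z<r$ places it in $U$. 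Hence $y+z\in\varphi_{\varepsilon}(v)\cap U$, which is exactly what lower hemicontinuity at $x$ demands.

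I expect the only genuine obstacle to be recognising that the naive perturbation argument — take a point of $\varphi_{\varepsilon}(x)\cap U$ and add a small element of $\varphi(v-x)$ furnished by $\alpha$-boundedness — does not close, because that point may have norm exactly $\alpha+\varepsilon$ and the perturbation can push it out of $(\alpha+\varepsilon)\closedball Y$. Lemma~\ref{lem:convex-intersection-withball-nonempty} is precisely the device that lets one first retreat into the open ball $(\alpha+\varepsilon)\openball Y$ without leaving $U$, creating the margin $\delta$ that absorbs the perturbation; the remainder is bookkeeping with the triangle inequality and the definition of lower hemicontinuity.
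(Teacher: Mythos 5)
Your proof is correct and follows essentially the same route as the paper's: both first invoke Lemma~\ref{lem:convex-intersection-withball-nonempty} to move the witness point into the open ball $(\alpha+\varepsilon)\openball Y$ while staying in $U$, and then perturb it by a small element of $\varphi(v-x)$ obtained from $\alpha$-boundedness, using additivity to land in $\varphi(v)$. The only differences are cosmetic bookkeeping (you track the slack $\delta$ and the radius $r$ separately and use auxiliary parameter $1$ in the $\alpha$-boundedness, where the paper chooses a single $r$ with $y+r\openball Y\subseteq(\alpha+\varepsilon)\openball Y\cap U$ and uses parameter $\varepsilon$).
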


\begin{proof}
Since $\varphi$ is $\alpha$-bounded and convex-valued, that $\varphi_{\varepsilon}$
is non-empty\textendash{} and convex-valued for every $\varepsilon>0$
is immediate.

We establish the lower hemicontinuity of $\varphi_{\varepsilon}$.
Let $\varepsilon>0$ and $x\in\sphere X$ be arbitrary. Let $U\subseteq Y$
be open such that $\varphi_{\varepsilon}(x)\cap U=\varphi(x)\cap(\alpha+\varepsilon)\closedball Y\cap U$
is non-empty. By Lemma~\ref{lem:convex-intersection-withball-nonempty},
$\varphi(x)\cap(\alpha+\varepsilon)\openball Y\cap U$ is non-empty.
Let $y\in\varphi(x)\cap(\alpha+\varepsilon)\openball Y\cap U$ be
arbitrary, and let $r>0$ be such that $y+r\openball Y\subseteq(\alpha+\varepsilon)\openball Y\cap U$.
Now, let $z\in\sphere X$ be such that $\norm{z-x}<r(\alpha+\varepsilon)^{-1}$.
Since $\varphi$ is $\alpha$-bounded, there exists some $v\in\varphi(z-x)\cap(\alpha+\varepsilon)\norm{z-x}\closedball Y$.
Then, $\norm v\leq(\alpha+\varepsilon)\norm{z-x}<r(\alpha+\varepsilon)^{-1}(\alpha+\varepsilon)=r,$
so that $y+v\in y+r\openball Y\subseteq(\alpha+\varepsilon)\openball Y\cap U$,
and, since $\varphi$ is additive, we have $y+v\in\varphi(x+z-x)=\varphi(z)$.
Therefore $y+v\in\varphi(z)\cap(\alpha+\varepsilon)\openball Y\cap U=\varphi_{\varepsilon}(z)\cap U$.
Since $z$ was chosen arbitrarily from $V:=(x+r(\alpha+\varepsilon)^{-1}\openball X)\cap\sphere X$,
we conclude that $\varphi_{\varepsilon}$ is lower hemicontinuous.
\end{proof}

\section{An Open Mapping Theorem for cone-valued correspondences\label{sec:metric-cones-and-open-mapping-thm}}

In this section we will prove one of our main results, namely an Open
Mapping Theorem for cone-valued correspondences.

We begin with the following definitions and notation:
\begin{defn}
Let $C$ be a set equipped with operations $+:C\times C\to C$ and
$\cdot:\Rnonneg\times C\to C$ . The set $C$ will be called an \emph{abstract
cone}, if there exists an element $0\in C$ such that, for all $u,v,w\in C$
and $\lambda,\mu\in\Rnonneg$, the following hold:

\begin{enumerate}
\item $u+0=u$
\item $(u+v)+w=u+(v+w)$
\item $u+v=v+u$
\item $u+v=u+w$ implies $v=w$
\item $1u=u$
\item $(\lambda\mu)u=\lambda(\mu u)$
\item $(\lambda+\mu)u=\lambda u+\mu u$
\item $\lambda(u+v)=\lambda u+\lambda v$.
\end{enumerate}
\end{defn}

{}
\begin{defn}
Let $C$ be an abstract cone and $d$ a metric on $C$. The pair $(C,d)$
will be called a \emph{metric cone} if, for all $u,v,w\in C$ and
$\lambda\in\Rnonneg$, 
\begin{eqnarray*}
d(0,\lambda u) & = & \lambda d(0,u),\\
d(u+v,u+w) & \leq & d(v,w).
\end{eqnarray*}
We introduce the notation $\conenorm x:=d(0,x)$ for $x\in C$ and
by $\openball C$ we will denote the open unit ball about $0\in C$,
i.e., $\openball C:=\set{c\in C}{\conenorm c<1}$.
\end{defn}

Similarly to Banach spaces, if a metric cone is complete, then absolutely
convergent series always converge.
\begin{lem}
\label{lem:cone-abs-conv}Let $(C,d)$ be a complete metric cone.
If a sequence $\{c_{i}\}\subseteq C$ is such that $\sum_{i=1}^{\infty}\conenorm{c_{i}}$
converges, then the series $\sum_{i=1}^{\infty}c_{i}$ converges in
$C$.
\end{lem}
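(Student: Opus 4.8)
The plan is to show that the sequence of partial sums $s_{n}:=\sum_{i=1}^{n}c_{i}$ is Cauchy in $(C,d)$ and then invoke completeness. The finite sums $s_{n}$ are unambiguous thanks to the associativity and commutativity axioms of an abstract cone, so this makes sense.

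First I would record a triangle-type inequality for $\conenorm{\cdot}$: for any $u,v\in C$,
\[
\conenorm{u+v}=d(0,u+v)\leq d(0,u)+d(u,u+v)\leq\conenorm u+\conenorm v,
\]
where the first inequality is the triangle inequality for $d$, and the second uses $d(u,u+v)=d(u+0,u+v)\leq d(0,v)=\conenorm v$, which is the translation-contraction inequality in the definition of a metric cone together with $u+0=u$. By induction on the number of summands this gives $\conenorm{\sum_{i=k}^{m}c_{i}}\leq\sum_{i=k}^{m}\conenorm{c_{i}}$ for all $k\leq m$.

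Next, for $m<n$ I would write $s_{n}=s_{m}+\sum_{i=m+1}^{n}c_{i}$ (using associativity) and apply the translation-contraction inequality once more:
\[
d(s_{m},s_{n})=d\!\left(s_{m}+0,\ s_{m}+\textstyle\sum_{i=m+1}^{n}c_{i}\right)\leq\conenorm{\textstyle\sum_{i=m+1}^{n}c_{i}}\leq\sum_{i=m+1}^{n}\conenorm{c_{i}}.
\]
Since $\sum_{i=1}^{\infty}\conenorm{c_{i}}$ converges, its tails tend to $0$, so the right-hand side is arbitrarily small once $m$ (and hence $n$) is large; therefore $\{s_{n}\}$ is Cauchy. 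By completeness of $(C,d)$ the sequence $\{s_{n}\}$ converges, which is precisely the assertion that $\sum_{i=1}^{\infty}c_{i}$ converges in $C$.

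I do not expect any serious obstacle here: the entire argument is the standard Banach-space proof that absolute convergence implies convergence, transplanted to metric cones. The only point requiring a little care is the derivation of the subadditivity of $\conenorm{\cdot}$ purely from the two metric-cone inequalities and the cone axioms — in particular, checking that each invocation of $d(u+\,\cdot\,,u+\,\cdot\,)\leq d(\,\cdot\,,\,\cdot\,)$ is legitimate — after which the Cauchy estimate and the appeal to completeness are immediate. (Note that the homogeneity axiom $d(0,\lambda u)=\lambda d(0,u)$ is not needed for this lemma.)
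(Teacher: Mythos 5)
Your argument is correct and is essentially the paper's own proof: the paper simply asserts that the partial sums form a Cauchy sequence ``from the definition of a metric cone'' and invokes completeness, while you have spelled out the details (deriving subadditivity of $\conenorm{\cdot}$ from the translation-contraction axiom and estimating $d(s_m,s_n)$ by the tail of the convergent series). No gap; the two proofs coincide in substance.
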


\begin{proof}
Let $C$ be a complete metric cone. Let $\{c_{i}\}\subseteq C$ be
such that $\sum_{i=1}^{\infty}\conenorm{c_{i}}<\infty.$ From the
definition of a metric cone, it is easily seen that $\curly{\sum_{i=1}^{n}c_{i}}_{n\in\N}$
is a Cauchy sequence, and hence converges in $C$. 
\end{proof}
To establish our Open Mapping Theorem, we will employ the Baire Category
Theorem in the form of \zabreiko's Lemma:
\begin{lem}[{\zabreiko's Lemma, \cite[Lemma~1.6.3]{Megginson}}]
\label{lem:Zabreiko} Every countably subadditive seminorm on a Banach
space is continuous.
\end{lem}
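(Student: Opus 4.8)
The plan is to adapt the classical Baire-category proof of the Open Mapping Theorem, upgrading ordinary subadditivity to countable subadditivity at precisely one step. Write $p$ for the given countably subadditive seminorm on the Banach space $X$. Since $p$ is a seminorm it is positively homogeneous and satisfies $p(-x)=p(x)$, so it suffices to show that $p$ is bounded on $\closedball X$: boundedness of a seminorm on the unit ball is equivalent to the existence of $M>0$ with $p(x)\le M\norm x$ for all $x\in X$, which is exactly continuity.

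First I would extract a ball on which $p$ is almost bounded. For each $n\in\N$ put $A_{n}:=\set{x\in X}{p(x)\le n}$; since $p$ is finite-valued, $X=\bigcup_{n\in\N}A_{n}$ and hence $X=\bigcup_{n\in\N}\overline{A_{n}}$. As $X$ is complete, the Baire Category Theorem forces some $\overline{A_{n}}$ to have nonempty interior. Subadditivity and positive homogeneity make each $A_{n}$ convex, and $p(-x)=p(x)$ makes it symmetric, so $\overline{A_{n}}$ is convex and symmetric; an interior point giving $x_{0}+r\openball X\subseteq\overline{A_{n}}$ then yields $-x_{0}+r\openball X\subseteq\overline{A_{n}}$ and, averaging, $r\openball X\subseteq\overline{A_{n}}$. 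Rescaling by positive homogeneity produces $\rho>0$ with $\overline{A_{1}}\supseteq\rho\openball X$, and more generally $\overline{\set{x\in X}{p(x)\le t}}\supseteq t\rho\openball X$ for every $t>0$.

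The crux, and the only place countable subadditivity enters, is removing the closure. Fix $x$ with $\norm x<\rho$. Since $x\in\overline{A_{1}}$, choose $w_{1}$ with $p(w_{1})\le1$ and $\norm{x-w_{1}}<\rho/2$; then $x-w_{1}\in\tfrac12\rho\openball X\subseteq\overline{\set{y\in X}{p(y)\le\tfrac12}}$, so choose $w_{2}$ with $p(w_{2})\le\tfrac12$ and $\norm{x-w_{1}-w_{2}}<\rho/4$; continuing inductively gives a sequence $\{w_{k}\}$ with $p(w_{k})\le2^{-(k-1)}$ and $\norm{x-\sum_{i=1}^{k}w_{i}}<\rho\,2^{-k}$. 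Thus $\sum_{k}w_{k}$ converges to $x$ in $X$ while $\sum_{k}p(w_{k})\le\sum_{k\ge1}2^{-(k-1)}=2$, and countable subadditivity yields $p(x)\le\sum_{k}p(w_{k})\le2$.

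Finally I would globalize by homogeneity: for nonzero $x$, applying the previous bound to $\tfrac{\rho}{2\norm x}x$ gives $p(x)\le(4/\rho)\norm x$, so $p$ is bounded on $\closedball X$ and therefore continuous. I expect the main obstacle to be exactly this decompression step: the Baire argument only delivers the \emph{closure} of a sublevel set containing a ball, and passing from $x\in\overline{A_{1}}$ to the genuine estimate $p(x)\le2$ is forced by the telescoping successive-approximation series, where it is crucial that $p$ be countably---not merely finitely---subadditive so that it may be passed through the infinite sum.
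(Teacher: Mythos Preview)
Your proof is correct and is essentially the standard argument for \zabreiko's Lemma as found in Megginson. Note, however, that the paper does not actually prove this lemma: it is stated with a citation to \cite[Lemma~1.6.3]{Megginson} and used as a black box in the proof of Theorem~\ref{thm:openmappingtheorem}, so there is no proof in the paper to compare against. What you have written is precisely the argument one finds in the cited reference, so in that sense your approach agrees with the intended one.
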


Finally, we will prove our main result of this section:
\begin{thm}[Open Mapping Theorem for cone-valued correspondences]
\label{thm:openmappingtheorem} Let $C$ be a complete metric cone,
$Y$ a Banach space and $D\subseteq Y$ a closed cone. Let ${T:C\to Y}$
be a continuous additive positively homogeneous map. If the correspondence
$\Psi:C\tocorr Y$ defined by $\Psi(c):=Tc+D\ (c\in C)$ is surjective,
(in the sense that, for every $y\in Y$, there exists some $c\in C$
such that $y\in\Psi(c)$), then $\Psi(\openball C)\subseteq Y$ is
open.
\end{thm}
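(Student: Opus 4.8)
The plan is to mimic the classical Banach open mapping theorem argument, but phrased through \zabreiko's Lemma (Lemma~\ref{lem:Zabreiko}) applied to a suitable seminorm on $Y$, with the role of ``preimage'' played by the correspondence $\Psi$. First I would define, for $y\in Y$,
\[
p(y):=\inf\set{\conenorm c}{c\in C,~y\in Tc+D}=\inf\set{\conenorm c}{c\in C,~y-Tc\in D}.
\]
Since $\Psi$ is surjective this infimum is over a non-empty set, so $p:Y\to\Rnonneg$ is well-defined. The first block of work is to check that $p$ is a seminorm: positive homogeneity of $p$ follows from $\conenorm{\lambda c}=\lambda\conenorm c$ together with positive homogeneity of $T$ and the fact that $D$ is a cone (for $\lambda=0$ use $0\in C$, $0\in D$, so $p(0)=0$); subadditivity follows from additivity of $T$, $D+D\subseteq D$, and the metric-cone inequality $\conenorm{c+c'}=d(0,c+c')\leq d(0,c)+d(c,c+c')\leq\conenorm c+\conenorm{c'}$. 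Actually for \zabreiko\ I need more, namely \emph{countable} subadditivity: if $y=\sum_{i}y_{i}$ in $Y$ with $\sum_i p(y_i)<\infty$, then $p(y)\le\sum_i p(y_i)$. To see this, given $\varepsilon>0$ pick $c_i\in C$ with $y_i-Tc_i\in D$ and $\conenorm{c_i}\le p(y_i)+2^{-i}\varepsilon$; then $\sum_i\conenorm{c_i}<\infty$, so by Lemma~\ref{lem:cone-abs-conv} the series $c:=\sum_i c_i$ converges in $C$, with $\conenorm c\le\sum_i\conenorm{c_i}\le\sum_i p(y_i)+\varepsilon$. Continuity of $T$ gives $Tc=\sum_i Tc_i$ (this needs $T$ to commute with convergent series — true since $T$ is additive and continuous, so it is continuous and additive on partial sums, hence on limits), and since $D$ is closed and each $y_i-Tc_i\in D$ with the partial sums of $\sum_i(y_i-Tc_i)$ converging to $y-Tc$, we get $y-Tc\in D$. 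Hence $p(y)\le\conenorm c\le\sum_i p(y_i)+\varepsilon$, and let $\varepsilon\to0$.

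Having verified that $p$ is a countably subadditive seminorm on the Banach space $Y$, \zabreiko's Lemma gives a constant $M>0$ with $p(y)\le M\norm y$ for all $y\in Y$. This is the heart of the argument and is where the Baire-category content is hidden; I expect no obstacle here beyond carefully nailing down the countable-subadditivity estimate above, which is genuinely the main technical point — everything hinges on being able to sum the $c_i$ inside the complete metric cone and then pass the sum through $T$ and through the closed cone $D$.

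Finally I would deduce openness of $\Psi(\openball C)$. Fix $y_0\in\Psi(\openball C)$, say $y_0=Tc_0+d_0$ with $\conenorm{c_0}<1$, $d_0\in D$. Choose $\varepsilon>0$ with $\conenorm{c_0}<1-\varepsilon$, and set $\rho:=\varepsilon/M>0$. For any $y\in Y$ with $\norm{y-y_0}<\rho$ we have $p(y-y_0)\le M\norm{y-y_0}<\varepsilon$, so there exists $c_1\in C$ with $\conenorm{c_1}<\varepsilon$ and $(y-y_0)-Tc_1\in D$. Then $y-T(c_0+c_1)=\bigl(y_0-Tc_0\bigr)+\bigl((y-y_0)-Tc_1\bigr)=d_0+\bigl((y-y_0)-Tc_1\bigr)\in D+D\subseteq D$, and $\conenorm{c_0+c_1}\le\conenorm{c_0}+\conenorm{c_1}<(1-\varepsilon)+\varepsilon=1$, so $c_0+c_1\in\openball C$ and $y\in T(c_0+c_1)+D=\Psi(c_0+c_1)\subseteq\Psi(\openball C)$. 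Thus the open ball of radius $\rho$ about $y_0$ lies in $\Psi(\openball C)$, proving $\Psi(\openball C)$ is open. I would write out these three stages — $p$ is a countably subadditive seminorm; \zabreiko\ bounds it; the seminorm bound yields openness — as separate paragraphs, flagging the cone-summation step (Lemma~\ref{lem:cone-abs-conv} plus continuity of $T$ plus closedness of $D$) as the one requiring care.
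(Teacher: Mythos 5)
Your overall strategy is the same as the paper's: define the ``minimal preimage size'' functional, prove it is countably subadditive using Lemma~\ref{lem:cone-abs-conv} together with continuity and additivity of $T$ and closedness of $D$, invoke \zabreiko's Lemma, and read off openness of $\Psi(\openball C)$. The countable-subadditivity block and the final openness argument are both correct (your explicit ball argument at the end is a harmless variant of the paper's observation that $\Psi(\openball C)=p^{-1}([0,1))$, and your use of $\conenorm{c_0+c_1}\leq\conenorm{c_0}+\conenorm{c_1}$ is justified by the metric cone axioms). However, there is one genuine gap: you call $p$ a seminorm and apply \zabreiko's Lemma (Lemma~\ref{lem:Zabreiko}) to it directly, but $p$ is only \emph{positively} homogeneous, not absolutely homogeneous; in general $p(-y)\neq p(y)$. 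For instance, with $Y=\R$, $C=\Rnonneg$, $T$ the inclusion and $D=-\Rnonneg$, one gets $p(y)=\max\{y,0\}$, so $p(1)=1\neq0=p(-1)$. Since the cited lemma is stated for seminorms, your appeal to it is not licensed as written.

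The repair is short and is exactly what the paper does: set $q(y):=p(y)\vee p(-y)$. Then $q$ is a genuine seminorm, and it inherits countable subadditivity from $p$ (apply your estimate to whichever of $\pm\sum y_n$ realizes the maximum, and use $p(\pm y_n)\leq q(y_n)$). \zabreiko's Lemma then gives continuity of $q$, hence $p(y)\leq q(y)\leq M\norm y$, which is the bound your final paragraph needs; alternatively, $|p(x)-p(y)|\leq q(x-y)$ shows $p$ itself is continuous and $\Psi(\openball C)=p^{-1}([0,1))$ is open. With this one symmetrization step inserted, your proof is complete and essentially coincides with the paper's.
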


\begin{proof}
Let $\Psi$, as defined, be surjective. Since $T$ is additive and
$D$ is a cone, it is clear that $\Psi$ is an additive correspondence.
We define the map $\rho:Y\to\Rnonneg$ by 
\[
\rho(y):=\inf\set{\conenorm c}{y\in\Psi(c)},\quad(y\in Y).
\]
We note that $\rho$ is positively homogeneous, and, by additivity
of $\Psi$, we see that $\rho$ is subadditive. Furthermore, the map
$q:Y\to\Rnonneg$, defined by $q(y):=\rho(y)\vee\rho(-y)$ for $y\in Y$,
is a seminorm on $Y$. 

We claim that that $q$ is countably subadditive. 

Let $\{y_{n}\}_{n\in\N}\subseteq Y$ be such that the series $\sum_{n=1}^{\infty}y_{n}$
converges. If $\sum_{n=1}^{\infty}q(y_{n})=\infty$ then $q\parenth{\sum_{n=1}^{\infty}y_{n}}\leq\sum_{n=1}^{\infty}q(y_{n})$
holds trivially. We may therefore assume that $\sum_{n=1}^{\infty}q(y_{n})<\infty$.
Let $\kappa\in\{\pm1\}$ be such that $q\parenth{\sum_{n=1}^{\infty}y_{n}}=\rho\parenth{\kappa\sum_{n=1}^{\infty}y_{n}}$.
Let $\varepsilon>0$ be arbitrary and, for each $n\in\N$, let $c_{n}\in C$
be such that $\kappa y_{n}\in\Psi(c_{n})$ and $\conenorm{c_{n}}<\rho(\kappa y_{n})+2^{-n}\varepsilon\leq q(y_{n})+2^{-n}\varepsilon$.
Then $\sum_{n=1}^{\infty}\conenorm{c_{n}}<\sum_{n=1}^{\infty}q(y_{n})+\varepsilon$.
Hence, by Lemma~\ref{lem:cone-abs-conv}, the series $\sum_{n=1}^{\infty}c_{n}$
converges and $\conenorm{\sum_{n=1}^{\infty}c_{n}}<\sum_{n=1}^{\infty}q(y_{n})+\varepsilon$.

For every $n\in\N$, we have $\kappa y_{n}\in\Psi(c_{n})=Tc_{n}+D$,
i.e., there exists some $d_{n}\in D$ such that $\kappa y_{n}-Tc_{n}=d_{n}$.
Because $T$ is continuous and additive, the series $\sum_{n=1}^{\infty}\parenth{\kappa y_{n}-Tc_{n}}$
converges to $\sum_{n=1}^{\infty}\kappa y_{n}-T\parenth{\sum_{n=1}^{\infty}c_{n}}$.
But $\sum_{n=1}^{\infty}d_{n}=\sum_{n=1}^{\infty}\kappa y_{n}-T\parenth{\sum_{n=1}^{\infty}c_{n}}$,
and, in particular we note that the series $\sum_{n=1}^{\infty}d_{n}$
converges. Since $D$ is a closed cone, the series $\sum_{n=1}^{\infty}d_{n}$
converges to a point in $D$. Therefore 
\[
\kappa\sum_{n=1}^{\infty}y_{n}=T\parenth{\sum_{n=1}^{\infty}c_{n}}+\sum_{n=1}^{\infty}d_{n}\in T\parenth{\sum_{n=1}^{\infty}c_{n}}+D,
\]
 and 
\[
q\parenth{\sum_{n=1}^{\infty}y_{n}}=\rho\parenth{\kappa\sum_{n=1}^{\infty}y_{n}}\leq\conenorm{\sum_{n=1}^{\infty}c_{n}}\leq\sum_{n=1}^{\infty}\conenorm{c_{n}}<\sum_{n=1}^{\infty}q(y_{n})+\varepsilon.
\]
Since $\varepsilon>0$ was chosen arbitrarily, the claim that $q$
is countably subadditive follows. We conclude that $q$ is continuous
by \zabreiko's Lemma (Lemma~\ref{lem:Zabreiko}).

By subadditivity of $\rho$, for all $x,y\in Y$ we have $|\rho(x)-\rho(y)|\leq\max\{\rho(\pm(x-y))\}=q(x-y)$,
which implies that $\rho$ is also continuous, and finally, that $\rho^{-1}([0,1))=\Psi(\openball C)$
is open.
\end{proof}

\section{Strong Klee-And\^o Theorems for coadditivity and conormality\label{sec:strong-klee-ando-theorem}}

We are now ready to establish our Strong Klee-And\^o Theorems for
conormality and coadditivity through an application of Theorem~\ref{thm:openmappingtheorem}.
Although our focus in this paper is on establishing a Klee-And\^o
Theorem for coadditivity (Corollary~\ref{cor:klee-ando-for-coadd}),
for the sake of completeness and illustration we include a proof of
a Klee-And\^o Theorem conormality, Corollary~\ref{cor:klee-ando-for-conorm}
(also proven in \cite{deJeuMesserschmidtOpenMapping}).
\begin{thm}
\label{thm:gen-klee-ando}Let $X$ be a Banach space and $\{C_{\omega}\}_{\omega\in\Omega}$
be an indexed collection of closed cones in $X$. \medskip 

\begin{enumerate}
\item Let $Z$ be either of the spaces $\comega X$ or $\ellinftyomega X$,
and let the correspondence $\Upsilon:Z\tocorr X$ be defined by 
\[
\Upsilon(\xi):=\bigcap_{\omega\in\Omega}\parenth{\xi_{\omega}+C_{\omega}}\quad(\xi\in Z).
\]
If $\Upsilon$ is non-empty-valued, then there exists some $\alpha>0$
for which $\Upsilon$ is $\alpha$-bounded. \medskip
\item Let the correspondence $\Delta:X\tocorr\elloneomega X$ be defined
by 
\[
\Delta(x):=\set{\xi\in\bigoplus_{\omega\in\Omega}C_{\omega}\subseteq\elloneomega X}{\sum_{\omega\in\Omega}\xi_{\omega}=x}\quad(x\in X).
\]
If $\Delta$ is non-empty-valued, then there exists some $\alpha>0$
for which $\Delta$ is $\alpha$-bounded. 
\end{enumerate}
\end{thm}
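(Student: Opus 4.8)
The plan is to obtain both parts as consequences of the Open Mapping Theorem for cone-valued correspondences (Theorem~\ref{thm:openmappingtheorem}), in each case taking the closed cone $D$ there to be the trivial cone $\{0\}$. The common mechanism is this: one realises the correspondence in question as the ``fibre map'' of a continuous additive positively homogeneous map $T\colon\mathcal C\to Y$ defined on an appropriate complete metric cone $\mathcal C$, so that the correspondence $\Psi:=T+\{0\}$ (that is, $\Psi(c)=\{Tc\}$) is surjective onto $Y$ precisely when the given correspondence is non-empty-valued. Theorem~\ref{thm:openmappingtheorem} then yields that $\Psi(\openball{\mathcal C})$ is open in $Y$, and since it contains $0=T(0)$ it contains $r\openball Y$ for some $r>0$; a routine positive-homogeneity and rescaling argument converts this into $\tfrac 2r$-boundedness of the correspondence. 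Here I use the elementary fact that any closed cone $\mathcal C$ in a Banach space $E$ is a complete metric cone under the metric inherited from $E$: the abstract-cone axioms reduce to the vector-space axioms of $E$ together with cancellation, completeness holds because $\mathcal C$ is closed in $E$, and $\conenorm{\lambda u}=\norm{\lambda u}=\lambda\norm u$ together with $\norm{(u+v)-(u+w)}=\norm{v-w}$ supply the two metric-cone identities. I do not expect a genuine obstacle; the points needing care are the choice of the encoding cone $\mathcal C$ and the verification that it is closed in the ambient Banach space.

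\textbf{Part (2).} Here I would take $\mathcal C:=\bigoplus_{\omega\in\Omega}C_{\omega}\subseteq\elloneomega X$, which is a closed cone in the Banach space $\elloneomega X$ — closedness uses that each $C_\omega$ is closed and that each coordinate evaluation $\elloneomega X\to X$ is continuous — hence a complete metric cone with $\conenorm\xi=\norm{\xi}_1$. Set $Y:=X$, $D:=\{0\}$, and define $T\colon\mathcal C\to X$ by $T\xi:=\sum_{\omega\in\Omega}\xi_\omega$; this series converges absolutely since $\sum_{\omega}\norm{\xi_\omega}=\norm{\xi}_1<\infty$, and $T$ is continuous ($\norm{T\xi}\le\norm{\xi}_1$), additive, and positively homogeneous. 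Then $\Psi(\xi):=T\xi+D=\{T\xi\}$ is surjective onto $X$ exactly when $\Delta$ is non-empty-valued, because for $\xi\in\mathcal C$ the equality $x=T\xi$ is precisely the assertion $\xi\in\Delta(x)$. Assuming $\Delta$ non-empty-valued, Theorem~\ref{thm:openmappingtheorem} gives $r\openball X\subseteq\Psi(\openball{\mathcal C})$ for some $r>0$, so every $x$ with $\norm x<r$ admits $\xi\in\mathcal C$ with $\sum_\omega\xi_\omega=x$ and $\sum_\omega\norm{\xi_\omega}<1$. Applying this to $\tfrac{r}{2\norm x}x$ for $x\neq0$ and rescaling the resulting decomposition by $\tfrac{2\norm x}{r}$ (using $\lambda C_\omega=C_\omega$ for $\lambda>0$, equivalently $\Delta(\lambda x)=\lambda\Delta(x)$) produces $\xi\in\Delta(x)$ with $\sum_\omega\norm{\xi_\omega}<\tfrac 2r\norm x$, so $\Delta$ is $\tfrac 2r$-bounded.

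\textbf{Part (1).} The one wrinkle is that $\Upsilon(\xi)=\bigcap_\omega(\xi_\omega+C_\omega)$ is an intersection of translated cones and is not literally of the form $c\mapsto Tc+D$, so I would pass to the ``graph cone'': put $\mathcal C:=\set{(y,\xi)\in X\times Z}{y\in\Upsilon(\xi)}$ with norm $\norm{(y,\xi)}:=\norm y+\norm{\xi}_\infty$. Since $(y,\xi)\in\mathcal C$ if and only if $y-\xi_\omega\in C_\omega$ for every $\omega$, and each map $(y,\xi)\mapsto y-\xi_\omega$ is continuous on the Banach space $X\times Z$, the set $\mathcal C$ is a closed cone in $X\times Z$, hence a complete metric cone, and $\conenorm{(y,\xi)}\ge\norm y$. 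Now take $Y:=Z$, $D:=\{0\}\subseteq Z$, and $T\colon\mathcal C\to Z$, $T(y,\xi):=\xi$, which is continuous, additive, and positively homogeneous; then $\Psi(y,\xi):=T(y,\xi)+D=\{\xi\}$ is surjective onto $Z$ exactly when $\Upsilon$ is non-empty-valued. Assuming this, Theorem~\ref{thm:openmappingtheorem} shows that $\Psi(\openball{\mathcal C})=\set{\xi\in Z}{\exists\,y\in\Upsilon(\xi),\ \norm y+\norm{\xi}_\infty<1}$ is open and contains $0$, hence contains $r\openball Z$ for some $r>0$; thus whenever $\norm{\xi}_\infty<r$ there is $y\in\Upsilon(\xi)$ with $\norm y<1$. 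Using $\Upsilon(\lambda\xi)=\lambda\Upsilon(\xi)$ for $\lambda>0$ and rescaling as in Part (2), one gets for every $\xi\neq0$ some $y\in\Upsilon(\xi)$ with $\norm y<\tfrac 2r\norm{\xi}_\infty$, so $\Upsilon$ is $\tfrac 2r$-bounded. As indicated, the only real work in either part is checking that the witness cones $\bigoplus_{\omega\in\Omega}C_{\omega}$ and $\set{(y,\xi)}{y\in\Upsilon(\xi)}$ are closed in the relevant Banach spaces — which is where closedness of the $C_\omega$ and continuity of the coordinate evaluations on $\comega X$ and $\ellinftyomega X$ enter — and the bookkeeping in the final scaling step.
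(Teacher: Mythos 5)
Your proof is correct. For part (2) your argument is essentially the paper's own: the same complete metric cone $\bigoplus_{\omega\in\Omega}C_{\omega}\subseteq\elloneomega X$, the same summation map, and the trivial codomain cone $D=\{0\}$. For part (1), however, you take a genuinely different route. The paper keeps the domain cone trivial: it takes $C:=X$ itself as the complete metric cone, maps it into $Y:=\ellinftyomega X$ (or $\comega X$) by the diagonal embedding $Tx:=(\omega\mapsto x)$, and absorbs the translated-cone structure into the codomain cone $D:=\bigoplus_{\omega\in\Omega}(-C_{\omega})$, so that $\xi\in Tx+D$ if and only if $x\in\Upsilon(\xi)$; this is precisely the situation with nontrivial $D$ that motivates the strengthened form of Theorem~\ref{thm:openmappingtheorem}. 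You instead encode everything in the domain, passing to the graph cone $\set{(y,\xi)\in X\times Z}{y\in\Upsilon(\xi)}$ and projecting onto $Z$ with $D=\{0\}$. Both are sound: your closedness check is the right one (the graph cone is the intersection over $\omega$ of preimages of the closed sets $C_{\omega}$ under the continuous maps $(y,\xi)\mapsto y-\xi_{\omega}$), and the rescaling is routine. What your route buys is that only the $D=\{0\}$ case of the Open Mapping Theorem is ever invoked, at the cost of introducing and verifying an auxiliary cone; the paper's route needs no auxiliary construction but leans on the full strength of the theorem, which is indeed the paper's stated reason for proving it. One minor point common to both treatments: the $\xi=0$ (resp.\ $x=0$) instance of $\alpha$-boundedness requires $0\in\Upsilon(0)$ (resp.\ $0\in\Delta(0)$), which holds because every cone in the paper's sense contains $0$; it is worth saying this explicitly since your rescaling step divides by $\norm{\xi}_{\infty}$.
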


\begin{proof}
We prove (1) for the case that $Z=\ellinftyomega X$. The case where
$Z=\comega X$ follows similarly. 

Let $\{C_{\omega}\}_{\omega\in\Omega}$ be such that, for every $\xi\in\ellinftyomega X$,
we have $\bigcap_{\omega\in\Omega}\parenth{\xi_{\omega}+C_{\omega}}\neq\emptyset$.
Define $D:=\bigoplus_{\omega\in\Omega}(-C_{\omega})\subseteq\ellinftyomega X$
and $T:X\to\ellinftyomega X$ as $Tx:=(\omega\mapsto x)$ for all
$x\in X$ and $\omega\in\Omega$. The cone $D$ is closed in $\ellinftyomega X$,
and, since $T$ is a linear isometry, it is clear that $T$ is continuous,
additive and positively homogeneous. We define $\Psi:X\tocorr\ellinftyomega X$
by $\Psi(x):=Tx+D$ for all $x\in X$. It is easily seen that $\Psi$
is surjective: Let $\xi\in\ellinftyomega X$ be arbitrary and choose
$x\in\bigcap_{\omega\in\Omega}\parenth{\xi_{\omega}+C_{\omega}}\neq\emptyset$,
then $\xi\in Tx+D$. Now, by our Open Mapping Theorem (Theorem~\ref{thm:openmappingtheorem}),
$\Psi(\openball X)$ is open in $\ellinftyomega X$. Let $\beta>0$
be such that $\beta\openball{\ellinftyomega X}\subseteq\Psi(\openball X).$
Then, for every $\xi\in\ellinftyomega X$ and $\varepsilon>0$, there
exists some $w\in\openball X$ such that $\beta\xi/(1+\varepsilon\beta)\norm{\xi}_{\infty}\in\Psi(w)=Tw+D$.
Setting $x:=(\beta^{-1}+\varepsilon)\norm{\xi}_{\infty}w$ we obtain
$\xi\in\Psi(x)$, implying $x\in\bigcap_{\omega\in\Omega}\parenth{\xi_{\omega}+C_{\omega}}$,
and $\norm x\leq(\beta^{-1}+\varepsilon)\norm{\xi}_{\infty}.$ I.e.,
setting $\alpha:=\beta^{-1}$, we have, for any $\xi\in\ellinftyomega X$
and $\varepsilon>0$, that $\Upsilon(\xi)\cap(\alpha+\varepsilon)\norm{\xi}_{\infty}\closedball X\neq\emptyset$.
We conclude that the correspondence $\Upsilon$ is $\alpha$-bounded.

We prove (2). Let $\{C_{\omega}\}_{\omega\in\Omega}$ be such that
for every $x\in X$, there exists some $\xi\in\bigoplus_{\omega\in\Omega}C_{\omega}\subseteq\elloneomega X$
such that $x=\sum_{\omega\in\Omega}\xi_{\omega}$. We define $C:=\bigoplus_{\omega\in\Omega}C_{\omega}\subseteq\elloneomega X$
and $\Sigma:C\to X$ by $\Sigma\xi:=\sum_{\omega\in\Omega}\xi_{\omega}$
for $\xi\in\elloneomega X$. Let $\Psi:\elloneomega X\tocorr X$ be
defined by $\Psi(\xi):=\Sigma\xi+\{0\}$. The cone $C$ is a complete
metric cone with the metric induced by the $\ell^{1}$-norm, and furthermore,
the map $\Sigma$ is surjective, continuous, additive and positively
homogeneous. Therefore, by our Open Mapping Theorem (Theorem~\ref{thm:openmappingtheorem}),
$\Psi(\openball C)$ is an open set. Let $\beta>0$ be such that $\beta\openball X\subseteq\Psi(\openball C).$
Then, for every $x\in X$ and $\varepsilon>0$, there exists some
$\eta\in\openball C$ such that $\beta x/(1+\varepsilon\beta)\norm x\in\Psi(\eta)=\sum_{\omega\in\Omega}\eta_{\omega}+\{0\}$.
Setting $\xi:=(\beta^{-1}+\varepsilon)\norm x\eta$, we obtain $x\in\Psi(\xi)$,
implying $x=\sum_{\omega\in\Omega}\xi_{\omega}$, and $\norm{\xi}_{1}\leq(\beta^{-1}+\varepsilon)\norm x.$
I.e., setting $\alpha:=\beta^{-1}$, for any $x\in X$ and $\varepsilon>0$,
we obtain $\Delta(x)\cap(\alpha+\varepsilon)\norm x\closedball{\elloneomega X}\neq\emptyset$.
We conclude that the correspondence $\Delta$ is $\alpha$-bounded.
\end{proof}
We now apply Proposition~\ref{prop:additive-bounded-is-lower-hemicont},
to show that certain correspondences related to $\Upsilon$ and $\Delta$
are lower hemicontinuous.
\begin{cor}
\label{cor:upsilon-and-delta-arelower-hemo-cont}Let $X$ be a Banach
space and $\{C_{\omega}\}_{\omega\in\Omega}$ an indexed collection
of closed cones. 

\begin{enumerate}
\item Let $Z$ be either of the spaces $\comega X$ or $\ellinftyomega X$.
If $\Upsilon:Z\tocorr X$, as defined in Theorem~\ref{thm:gen-klee-ando},
is non-empty-valued, then there exists a constant $\alpha>0$ such
that, for every $\varepsilon>0$, the correspondence $\Upsilon_{\varepsilon}:\sphere Z\tocorr X$,
defined by 
\[
\Upsilon_{\varepsilon}(\xi):=\Upsilon(\xi)\cap(\alpha+\varepsilon)\closedball X\quad(\xi\in\sphere Z),
\]
is non-empty\textendash{} closed\textendash{} and convex-valued, and
is lower hemicontinuous.\medskip
\item If $\Delta:X\tocorr\elloneomega X$, as defined in Theorem~\ref{thm:gen-klee-ando},
is non-empty-valued, then there exists a constant $\alpha>0$ such
that, for every $\varepsilon>0$, the correspondence $\Delta_{\varepsilon}:\sphere X\tocorr\elloneomega X$,
defined by 
\[
\Delta_{\varepsilon}(x):=\Delta(x)\cap(\alpha+\varepsilon)\closedball{\elloneomega X}\quad(x\in\sphere X),
\]
is non-empty\textendash{} closed\textendash{} and convex-valued, and
is lower hemicontinuous.
\end{enumerate}
\end{cor}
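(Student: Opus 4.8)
The plan is to read off both statements from Proposition~\ref{prop:additive-bounded-is-lower-hemicont}: the correspondences $\Upsilon$ and $\Delta$ are already convex-valued and additive, and Theorem~\ref{thm:gen-klee-ando} supplies exactly the $\alpha$-boundedness hypothesis that the proposition needs. Since the proposition already delivers non-empty-valuedness, convex-valuedness, and lower hemicontinuity of the truncated correspondences $\Upsilon_\varepsilon$ and $\Delta_\varepsilon$, the only assertion in the corollary not covered by it is the closedness of the values, which I would verify by hand. So there is essentially nothing to do except check hypotheses; I expect no genuine obstacle, only the bookkeeping of matching the various norms up correctly.

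For part~(1): assuming $\Upsilon$ is non-empty-valued, Theorem~\ref{thm:gen-klee-ando}(1) produces an $\alpha>0$ for which $\Upsilon:Z\tocorr X$ is $\alpha$-bounded (in the sense of the definition in Section~\ref{sec:bounded-and-additive-correspondences}, with the $\norm{\cdot}_\infty$-norm on $Z$, which is precisely the $\alpha$-boundedness Proposition~\ref{prop:additive-bounded-is-lower-hemicont} requires). That $\Upsilon$ is convex-valued is immediate, as $\Upsilon(\xi)=\bigcap_{\omega\in\Omega}(\xi_\omega+C_\omega)$ is an intersection of convex sets. Additivity follows from $C_\omega+C_\omega\subseteq C_\omega$: if $y\in\Upsilon(\xi)$ and $y'\in\Upsilon(\zeta)$, then for every $\omega$ we have $y+y'\in(\xi_\omega+C_\omega)+(\zeta_\omega+C_\omega)\subseteq(\xi_\omega+\zeta_\omega)+C_\omega$, so $y+y'\in\Upsilon(\xi+\zeta)$. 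Applying Proposition~\ref{prop:additive-bounded-is-lower-hemicont} with $\varphi:=\Upsilon$ then gives that, for every $\varepsilon>0$, the correspondence $\varphi_\varepsilon=\Upsilon_\varepsilon$ is non-empty- and convex-valued and lower hemicontinuous. Finally, $\Upsilon_\varepsilon$ is closed-valued: each $\xi_\omega+C_\omega$ is closed because $C_\omega$ is, hence so is $\Upsilon(\xi)$, and intersecting with the closed ball $(\alpha+\varepsilon)\closedball X$ preserves closedness.

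Part~(2) is handled identically, now invoking Theorem~\ref{thm:gen-klee-ando}(2) for the $\alpha$-boundedness of $\Delta:X\tocorr\elloneomega X$ (with the $\norm{\cdot}_1$-norm on the codomain). Convexity of $\Delta(x)$ is clear, since a convex combination of $\ell^1$-decompositions of $x$ with terms in the $C_\omega$ is again such a decomposition; additivity follows from $C_\omega+C_\omega\subseteq C_\omega$ together with the additivity of $\xi\mapsto\sum_{\omega\in\Omega}\xi_\omega$. For closedness, note $\Delta(x)=\parenth{\bigoplus_{\omega\in\Omega}C_\omega\subseteq\elloneomega X}\cap\Sigma^{-1}(\{x\})$, where $\Sigma\xi:=\sum_{\omega\in\Omega}\xi_\omega$ is continuous and $\bigoplus_{\omega\in\Omega}C_\omega$ is closed in $\elloneomega X$ (each $C_\omega$ being closed), so $\Delta(x)$ is closed, and again intersecting with $(\alpha+\varepsilon)\closedball{\elloneomega X}$ keeps it closed. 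Then Proposition~\ref{prop:additive-bounded-is-lower-hemicont}, applied with $\varphi:=\Delta$, finishes the argument. The one spot worth double-checking carefully is that the domain restriction to the unit spheres $\sphere Z$ and $\sphere X$ and the specific norms appearing in Theorem~\ref{thm:gen-klee-ando} line up verbatim with the formulation of Proposition~\ref{prop:additive-bounded-is-lower-hemicont}; beyond that the proof is mechanical.
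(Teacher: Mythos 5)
Your proposal is correct and follows exactly the paper's own route: invoke Theorem~\ref{thm:gen-klee-ando} for $\alpha$-boundedness, check additivity and convexity, apply Proposition~\ref{prop:additive-bounded-is-lower-hemicont}, and verify closedness of the values directly. The paper merely leaves as ``clear'' or ``easily seen'' the details (additivity of $\Upsilon$ and $\Delta$, closedness via $\Sigma^{-1}(\{x\})$ and the closed cones) that you spell out.
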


\begin{proof}
We prove (1). By Theorem~\ref{thm:gen-klee-ando}(1), there exists
some $\alpha>0$ for which $\Upsilon$ is $\alpha$-bounded. It is
then clear that $\Upsilon_{\varepsilon}$ is then non-empty\textendash ,
closed\textendash{} and convex-valued for every $\varepsilon>0$.
It is easily seen that $\Upsilon$ is additive, so that, by Proposition~\ref{prop:additive-bounded-is-lower-hemicont},
$\Upsilon_{\varepsilon}$ is lower hemicontinuous for every $\varepsilon>0$.

We prove (2). By Theorem~\ref{thm:gen-klee-ando}(2), there exists
some $\alpha>0$ for which $\Delta$ is $\alpha$-bounded. Again,
it is clear that $\Delta_{\varepsilon}$ is non-empty\textendash ,
closed\textendash{} and convex-valued for every $\varepsilon>0$.
That $\Delta$ is additive is easily seen, so that, by Proposition~\ref{prop:additive-bounded-is-lower-hemicont},
$\Delta_{\varepsilon}$ is lower hemicontinuous for every $\varepsilon>0$.
\end{proof}
We will now apply Michael's Selection Theorem to obtain continuous
selections of $\Delta$ and $\Upsilon$.
\begin{thm}[{Michael's Selection Theorem \cite[Theorem 17.66]{AliprantisBorder}}]
\label{thm:Michael's-Selection-Theorem} A lower hemicontinuous correspondence
from a paracompact space into a Banach space with non-empty, closed
and convex values admits a continuous selection.
\end{thm}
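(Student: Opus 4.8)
The plan is to prove Michael's Selection Theorem by the classical two-stage argument: first an \emph{approximate} selection lemma obtained from paracompactness via a partition of unity, and then an iteration of that lemma producing a uniformly Cauchy sequence of continuous approximate selections whose limit is an exact selection. Throughout, $A$ is the paracompact domain, $Y$ the Banach space, and $\varphi:A\tocorr Y$ the given lower hemicontinuous correspondence with non-empty, closed, convex values.

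First I would establish the approximate selection lemma: for every $\varepsilon>0$ there is a continuous $f:A\to Y$ with $\operatorname{dist}(f(a),\varphi(a))<\varepsilon$ for all $a\in A$. For each $y\in Y$ set $U_{y}:=\set{a\in A}{\varphi(a)\cap(y+\varepsilon\openball Y)\neq\emptyset}$; lower hemicontinuity makes each $U_{y}$ open, and non-emptiness of the values makes $\{U_{y}\}_{y\in Y}$ an open cover of $A$. Paracompactness then yields a locally finite partition of unity $\{p_{i}\}_{i\in I}$ with $\operatorname{supp}p_{i}\subseteq U_{y_{i}}$ for suitable $y_{i}\in Y$, and I set $f:=\sum_{i}p_{i}y_{i}$, a locally finite and hence continuous sum. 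Fixing $a$, only finitely many $p_{i}(a)$ are non-zero, and for each such $i$ we have $a\in U_{y_{i}}$, so there is $z_{i}\in\varphi(a)$ with $\norm{y_{i}-z_{i}}<\varepsilon$; then $z(a):=\sum_{i}p_{i}(a)z_{i}\in\varphi(a)$ by convexity, and $\norm{f(a)-z(a)}\leq\sum_{i}p_{i}(a)\norm{y_{i}-z_{i}}<\varepsilon$. Note that closedness of the values plays no role here.

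Next I would run the iteration, for which I need one standard stability lemma: if $\varphi$ is lower hemicontinuous, $g:A\to Y$ is continuous, $r>0$, and $\varphi(a)\cap(g(a)+r\openball Y)\neq\emptyset$ for every $a$, then $a\mapsto\varphi(a)\cap(g(a)+r\openball Y)$ is again non-empty- and convex-valued and lower hemicontinuous; this is proved by taking, at a point $a_{0}$ and an open $W$ meeting $\varphi(a_{0})\cap(g(a_{0})+r\openball Y)$, a point $y$ in that triple intersection, then a small open ball $B\ni y$ with $B\subseteq W$ and $y$ deep enough inside $g(a_{0})+r\openball Y$ that continuity of $g$ forces $B\subseteq g(a)+r\openball Y$ for $a$ near $a_{0}$, and finally applying lower hemicontinuity of $\varphi$ to $B\cap W$. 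With this in hand: start from the approximate selection lemma to get continuous $f_{1}$ with $\operatorname{dist}(f_{1}(a),\varphi(a))<2^{-1}$; inductively, given continuous $f_{n}$ with $\operatorname{dist}(f_{n}(a),\varphi(a))<2^{-n}$, apply the stability lemma to $\psi_{n}(a):=\varphi(a)\cap(f_{n}(a)+2^{-n}\openball Y)$ and then the approximate selection lemma to $\psi_{n}$ with $\varepsilon=2^{-n-1}$, obtaining continuous $f_{n+1}$ with $\operatorname{dist}(f_{n+1}(a),\psi_{n}(a))<2^{-n-1}$; hence $\operatorname{dist}(f_{n+1}(a),\varphi(a))<2^{-n-1}$ and, since $\psi_{n}(a)\subseteq f_{n}(a)+2^{-n}\openball Y$, also $\norm{f_{n+1}(a)-f_{n}(a)}<2^{-n}+2^{-n-1}$. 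Thus $\{f_{n}\}$ is uniformly Cauchy; completeness of $Y$ gives a uniform limit $f:A\to Y$, which is continuous, and $\operatorname{dist}(f(a),\varphi(a))\leq\norm{f(a)-f_{n}(a)}+\operatorname{dist}(f_{n}(a),\varphi(a))\to 0$, so $f(a)\in\overline{\varphi(a)}=\varphi(a)$ and $f$ is the desired continuous selection.

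I expect the main obstacle to be the bookkeeping around the stability lemma: one must intersect with \emph{open} balls centred at the current approximation in order to retain convexity, non-emptiness, and lower hemicontinuity simultaneously, whereas the closed-ball intersections that appear elsewhere in the paper would not survive this lemma; so the interplay between the open balls used during the construction and the closed value at the limit, which is the only place closedness of $\varphi(a)$ is used, needs to be arranged carefully. Everything else is a routine, if meticulous, assembly of paracompactness, partitions of unity, and completeness of $Y$.
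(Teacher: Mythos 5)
The paper does not prove this statement at all; it is imported verbatim from Aliprantis--Border as Theorem 17.66, so there is no internal proof to compare yours against. Your argument is the classical two-stage proof of Michael's theorem --- the approximate-selection lemma obtained from a locally finite partition of unity subordinate to the cover $\{U_{y}\}_{y\in Y}$, followed by the iteration through the correspondences $\psi_{n}(a)=\varphi(a)\cap\bigl(f_{n}(a)+2^{-n}\openball Y\bigr)$ --- and it is correct. You also handle the two standard pitfalls properly: the intersections in the stability lemma are taken with \emph{open} balls so that non-emptiness, convexity, and lower hemicontinuity all survive, and closedness of the values is invoked exactly once, at the end, to pass from $\operatorname{dist}(f(a),\varphi(a))=0$ to $f(a)\in\varphi(a)$. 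The only point worth flagging is that extracting a subordinate locally finite partition of unity uses paracompactness together with the Hausdorff (hence normality) hypothesis that is built into the definition of paracompactness in the cited source; since the present paper only ever applies the theorem to unit spheres of normed spaces, which are metric and therefore paracompact Hausdorff, this is harmless.
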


\begin{cor}
\label{cor:raw-klee-ando}Let $X$ be a Banach space and $\{C_{\omega}\}_{\omega\in\Omega}$
an indexed collection of closed cones. 

\begin{enumerate}
\item Let $Z$ be either of the spaces $\comega X$ or $\ellinftyomega X$.
If $\Upsilon:Z\tocorr X$, as defined in Theorem~\ref{thm:gen-klee-ando},
is non-empty-valued, then there exists a constant $\alpha>0$ such
that, for every $\varepsilon>0$, there exists a continuous positively
homogeneous selection $\upsilon:Z\to X$ of $\Upsilon$ such that
$\norm{\upsilon(\xi)}\leq(\alpha+\varepsilon)\norm{\xi}_{\infty}$
for every $\xi\in Z$.\medskip
\item If $\Delta:X\tocorr\elloneomega X$, as defined in Theorem~\ref{thm:gen-klee-ando},
is non-empty-valued, then there exists a constant $\alpha>0$ such
that, for every $\varepsilon>0$, there exists a continuous positively
homogeneous selection $\delta:X\to\elloneomega X$ of $\Delta$ such
that $\norm{\delta(x)}_{1}\leq(\alpha+\varepsilon)\norm x$ for every
$x\in X$. 
\end{enumerate}
\end{cor}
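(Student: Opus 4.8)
The plan is to combine Corollary~\ref{cor:upsilon-and-delta-arelower-hemo-cont} with Michael's Selection Theorem (Theorem~\ref{thm:Michael's-Selection-Theorem}) to obtain continuous selections of the truncated correspondences on the unit spheres, and then to pass to positively homogeneous selections on the whole space by rescaling.

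For part (1), I would proceed as follows. Fix the constant $\alpha>0$ produced by Corollary~\ref{cor:upsilon-and-delta-arelower-hemo-cont}(1), and let $\varepsilon>0$ be arbitrary. The unit sphere $\sphere Z$ is metrizable, hence paracompact, $X$ is a Banach space, and the correspondence $\Upsilon_{\varepsilon}:\sphere Z\tocorr X$ has non-empty, closed and convex values and is lower hemicontinuous, all by Corollary~\ref{cor:upsilon-and-delta-arelower-hemo-cont}(1). Michael's Selection Theorem then furnishes a continuous map $f:\sphere Z\to X$ with $f(\xi)\in\Upsilon_{\varepsilon}(\xi)$ for every $\xi\in\sphere Z$; in particular $f(\xi)\in\Upsilon(\xi)$ and $\norm{f(\xi)}\leq\alpha+\varepsilon$. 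I would then define $\upsilon:Z\to X$ by $\upsilon(0):=0$ and $\upsilon(\xi):=\norm{\xi}_{\infty}\,f\parenth{\xi/\norm{\xi}_{\infty}}$ for $\xi\neq 0$. This map is positively homogeneous by construction and satisfies $\norm{\upsilon(\xi)}\leq(\alpha+\varepsilon)\norm{\xi}_{\infty}$ for all $\xi\in Z$; it is continuous on $Z\setminus\{0\}$ as a composition of continuous maps, and the norm estimate forces continuity at $0$ as well. Finally, to see $\upsilon(\xi)\in\Upsilon(\xi)$ I would use that, for each $\omega\in\Omega$, $f\parenth{\xi/\norm{\xi}_{\infty}}\in\xi_{\omega}/\norm{\xi}_{\infty}+C_{\omega}$, so multiplying through by $\norm{\xi}_{\infty}>0$ and using that $C_{\omega}$ is a cone gives $\upsilon(\xi)\in\xi_{\omega}+C_{\omega}$; intersecting over $\omega$ yields $\upsilon(\xi)\in\Upsilon(\xi)$, and $\upsilon(0)=0\in\bigcap_{\omega\in\Omega}C_{\omega}=\Upsilon(0)$.

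Part (2) would be handled by the identical argument, with $\Delta$, $\sphere X$, $\elloneomega X$ and $\norm{\cdot}_{1}$ in the roles of $\Upsilon$, $\sphere Z$, $X$ and $\norm{\cdot}_{\infty}$, and with Corollary~\ref{cor:upsilon-and-delta-arelower-hemo-cont}(2) in place of Corollary~\ref{cor:upsilon-and-delta-arelower-hemo-cont}(1).

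I do not expect any serious obstacle here: the analytic content is already contained in Theorem~\ref{thm:gen-klee-ando} and Corollary~\ref{cor:upsilon-and-delta-arelower-hemo-cont}, and Michael's theorem does the rest. The only points needing a moment's attention are verifying continuity of the homogeneous extension at the origin (which is immediate from the norm bound) and checking that rescaling by a positive scalar carries $\Upsilon(\xi/\norm{\xi}_{\infty})$ back into $\Upsilon(\xi)$, which is exactly where the conical structure of the $C_{\omega}$ is used.
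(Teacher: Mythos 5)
Your proposal is correct and follows essentially the same route as the paper: Corollary~\ref{cor:upsilon-and-delta-arelower-hemo-cont} plus Michael's Selection Theorem on the (paracompact) unit sphere, followed by the positively homogeneous rescaling $\xi\mapsto\norm{\xi}_{\infty}f(\xi/\norm{\xi}_{\infty})$, with continuity at the origin coming from the norm bound. The extra details you supply (the cone structure ensuring the rescaled value stays in $\Upsilon(\xi)$, and $\upsilon(0)=0$ lying in $\Upsilon(0)$) are correct and only make explicit what the paper leaves as routine.
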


\begin{proof}
Before we begin, we note that all metric spaces are paracompact \cite{Stone},
so that, for any normed space $N$, its unit sphere $\sphere N$ is
paracompact with the metric induced from the norm.

We prove (1). By Corollary~\ref{cor:upsilon-and-delta-arelower-hemo-cont}(1)
and Michael's Selection Theorem (Theorem~\ref{thm:Michael's-Selection-Theorem}),
for every $\varepsilon>0$, there exists a continuous selection $\underline{\upsilon}:\sphere Z\to X$
of $\Upsilon_{\varepsilon}:\sphere Z\tocorr X$. We define $\upsilon$
by 
\[
\upsilon(\xi):=\begin{cases}
\norm{\xi}_{\infty}\underline{\upsilon}\parenth{\frac{\xi}{\norm{\xi}_{\infty}}} & \xi\neq0\\
0 & \xi=0
\end{cases}\quad(\xi\in Z),
\]
which is clearly positively homogeneous. That $\upsilon$ is continuous
follows from an application of the reverse triangle inequality while
keeping the boundedness of $\upsilon$ in mind.

We prove (2). By Corollary~\ref{cor:upsilon-and-delta-arelower-hemo-cont}(2)
and Michael's Selection Theorem (Theorem~\ref{thm:Michael's-Selection-Theorem}),
for every $\varepsilon>0$, there exists a continuous selection $\underline{\delta}:\sphere X\to\elloneomega X$
of $\Delta_{\varepsilon}:\sphere X\tocorr\elloneomega X$. By defining
\[
\delta(x):=\begin{cases}
\norm x\underline{\delta}\parenth{\frac{x}{\norm x}} & x\neq0\\
0 & x=0
\end{cases}\quad(x\in X),
\]
we are done, noting that positive homogeneity of $\delta$ is clear,
and that continuity follows again through application of the reverse
triangle inequality and the boundedness of $\delta$.
\end{proof}
The following two results, our Strong Klee-And\^o Theorems, now follow
from Corollary~\ref{cor:raw-klee-ando}. It is trivial that (3)$\Rightarrow$(2)$\Rightarrow$(1)
in both results below. The implication (1)$\Rightarrow$(3) in both
results below easily follow by fixing some $\varepsilon>0$, say $\varepsilon:=1$,
and applying Corollary~\ref{cor:raw-klee-ando}.
\begin{cor}[Strong Klee-And\^o Theorem for coadditivity]
\label{cor:klee-ando-for-coadd} Let $X$ be a Banach space and $\{C_{\omega}\}_{\omega\in\Omega}$
an indexed collection of closed cones in $X$. Let $Z$ be either
of the spaces $\comega X$ or $\ellinftyomega X$. Then the following
are equivalent:

\begin{enumerate}
\item For every $\xi\in Z$, the intersection $\bigcap_{\omega\in\Omega}(\xi_{\omega}+C_{\omega})$
is non-empty.\medskip
\item There exists an $\alpha>0$ such that, for every $\xi\in Z$, there
exists some $y\in\bigcap_{\omega\in\Omega}(\xi_{\omega}+C_{\omega})$
with $\norm y\leq\alpha\norm{\xi}_{\infty}$.\medskip
\item There exists an $\alpha>0$ and a continuous positively homogeneous
map $\upsilon:Z\to X$ such that, for every $\xi\in Z$, we have $\upsilon(\xi)\in\bigcap_{\omega\in\Omega}(\xi_{\omega}+C_{\omega})$
and $\norm{\upsilon(\xi)}\leq\alpha\norm{\xi}_{\infty}$.
\end{enumerate}
\end{cor}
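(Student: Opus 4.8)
The plan is to derive Corollary~\ref{cor:klee-ando-for-coadd} as an essentially formal consequence of the machinery already assembled, chiefly Corollary~\ref{cor:raw-klee-ando}(1). The three conditions sit in an obvious hierarchy, so I would organize the proof around the two implications (3)$\Rightarrow$(2)$\Rightarrow$(1) and (1)$\Rightarrow$(3), the former being immediate and the latter being where the real content, already packaged upstream, gets invoked.

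First I would dispatch (3)$\Rightarrow$(2): given the continuous positively homogeneous $\upsilon:Z\to X$ from (3), simply set $y:=\upsilon(\xi)$ for each $\xi\in Z$; then $y\in\bigcap_{\omega\in\Omega}(\xi_\omega+C_\omega)$ and $\norm y\le\alpha\norm\xi_\infty$ by hypothesis, with the same $\alpha$. Next, (2)$\Rightarrow$(1) is trivial: the existence of \emph{some} $y$ in the intersection satisfying a norm bound in particular witnesses that the intersection is non-empty, so $\Upsilon(\xi)\ne\emptyset$ for every $\xi\in Z$. These two steps are one line each and require no new ideas.

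The substantive implication is (1)$\Rightarrow$(3). Here I would observe that hypothesis (1) says precisely that the correspondence $\Upsilon:Z\tocorr X$ of Theorem~\ref{thm:gen-klee-ando}(1) is non-empty-valued. Hence Corollary~\ref{cor:raw-klee-ando}(1) applies and furnishes a constant $\alpha>0$ with the property that \emph{for every} $\varepsilon>0$ there is a continuous positively homogeneous selection $\upsilon:Z\to X$ of $\Upsilon$ with $\norm{\upsilon(\xi)}\le(\alpha+\varepsilon)\norm\xi_\infty$ for all $\xi\in Z$. Fixing $\varepsilon:=1$ and relabelling the constant $\alpha+1$ as $\alpha$, this $\upsilon$ satisfies $\upsilon(\xi)\in\Upsilon(\xi)=\bigcap_{\omega\in\Omega}(\xi_\omega+C_\omega)$ and $\norm{\upsilon(\xi)}\le\alpha\norm\xi_\infty$ for every $\xi\in Z$, which is exactly statement (3). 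Combining the three implications closes the cycle and proves the equivalence.

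I do not anticipate a genuine obstacle at this stage: all the weight has already been borne by Theorem~\ref{thm:openmappingtheorem} (applied inside Theorem~\ref{thm:gen-klee-ando} to get $\alpha$-boundedness of $\Upsilon$), by Proposition~\ref{prop:additive-bounded-is-lower-hemicont} (giving lower hemicontinuity of the truncations $\Upsilon_\varepsilon$ in Corollary~\ref{cor:upsilon-and-delta-arelower-hemo-cont}), and by Michael's Selection Theorem (yielding the continuous selection in Corollary~\ref{cor:raw-klee-ando}). The only point deserving a word of care is the passage from the ``for every $\varepsilon>0$'' form of Corollary~\ref{cor:raw-klee-ando}(1) to the single constant $\alpha$ appearing in (3), which is handled simply by freezing $\varepsilon=1$; everything else is bookkeeping. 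Accordingly, the proof should consist of little more than the paragraph above, with explicit pointers to Corollary~\ref{cor:raw-klee-ando}(1) and to Theorem~\ref{thm:gen-klee-ando}(1).
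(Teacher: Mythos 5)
Your proposal is correct and follows exactly the paper's own argument: the implications (3)$\Rightarrow$(2)$\Rightarrow$(1) are trivial, and (1)$\Rightarrow$(3) is obtained by fixing $\varepsilon:=1$ in Corollary~\ref{cor:raw-klee-ando}(1), whose hypothesis is precisely statement (1). Nothing further is needed.
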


{}
\begin{cor}[Strong Klee-And\^o Theorem for conormality]
\label{cor:klee-ando-for-conorm} Let $X$ be a Banach space and
$\curly{C_{\omega}}_{\omega\in\Omega}$ an indexed collection of closed
cones in $X$. The following are equivalent:

\begin{enumerate}
\item For every $x\in X$, there exists a decomposition $x=\sum_{\omega\in\Omega}c_{\omega}$,
with $c_{\omega}\in C_{\omega}$ for every $\omega\in\Omega$, and
satisfying $\sum_{\omega\in\Omega}\norm{c_{\omega}}<\infty$.\medskip
\item There exists an $\alpha>0$ such that, for every $x\in X$, there
exists a decomposition $x=\sum_{\omega\in\Omega}c_{\omega}$, with
$c_{\omega}\in C_{\omega}$ for every $\omega\in\Omega$, and satisfying
$\sum_{\omega\in\Omega}\norm{c_{\omega}}\leq\alpha\norm x$.\medskip
\item There exists an $\alpha>0$ and, for every $\omega\in\Omega$, there
exists a continuous positively homogeneous map $\delta_{\omega}:X\to C_{\omega}$
such that, for every $x\in X$, we have $x=\sum_{\omega\in\Omega}\delta_{\omega}(x)$
and $\sum_{\omega\in\Omega}\norm{\delta_{\omega}(x)}\leq\alpha\norm x$.
\end{enumerate}
\end{cor}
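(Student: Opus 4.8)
The plan is to establish the cycle of implications $(3)\Rightarrow(2)\Rightarrow(1)\Rightarrow(3)$, of which only the last is not essentially immediate. For $(3)\Rightarrow(2)$ I would, given the constant $\alpha>0$ and the continuous positively homogeneous maps $\delta_\omega:X\to C_\omega$ furnished by $(3)$, simply put $c_\omega:=\delta_\omega(x)$ for each fixed $x\in X$; then $c_\omega\in C_\omega$, $x=\sum_{\omega\in\Omega}c_\omega$, and $\sum_{\omega\in\Omega}\norm{c_\omega}=\sum_{\omega\in\Omega}\norm{\delta_\omega(x)}\leq\alpha\norm x$, which is exactly $(2)$. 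For $(2)\Rightarrow(1)$: the decomposition supplied by $(2)$ already satisfies $\sum_{\omega\in\Omega}\norm{c_\omega}\leq\alpha\norm x<\infty$, so $(1)$ holds verbatim with nothing further to check.

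The content lies in $(1)\Rightarrow(3)$, where I would invoke the machinery assembled earlier in Section~\ref{sec:strong-klee-ando-theorem}. Assuming $(1)$, the correspondence $\Delta:X\tocorr\elloneomega X$ of Theorem~\ref{thm:gen-klee-ando}(2), namely $\Delta(x)=\set{\xi\in\bigoplus_{\omega\in\Omega}C_{\omega}\subseteq\elloneomega X}{\sum_{\omega\in\Omega}\xi_{\omega}=x}$, is non-empty-valued: a decomposition $x=\sum_{\omega}c_\omega$ with $c_\omega\in C_\omega$ and $\sum_{\omega}\norm{c_\omega}<\infty$ is precisely an element $(c_\omega)_{\omega\in\Omega}\in\Delta(x)$. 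Applying Corollary~\ref{cor:raw-klee-ando}(2) and fixing $\varepsilon:=1$, I obtain a constant $\alpha>0$ and a continuous positively homogeneous selection $\delta:X\to\elloneomega X$ of $\Delta$ with $\norm{\delta(x)}_{1}\leq(\alpha+1)\norm x$ for all $x\in X$. I would then define, for each $\omega\in\Omega$, the map $\delta_\omega:X\to X$ as the composition of $\delta$ with the $\omega$-th coordinate projection $\pi_\omega:\elloneomega X\to X$. Since $\pi_\omega$ is linear with $\norm{\pi_\omega(\xi)}=\norm{\xi_\omega}\leq\norm{\xi}_{1}$, it is continuous and positively homogeneous, hence so is each $\delta_\omega$. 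Because $\delta(x)\in\Delta(x)$ one has $\delta_\omega(x)=\delta(x)_\omega\in C_\omega$, $\sum_{\omega\in\Omega}\delta_\omega(x)=\sum_{\omega\in\Omega}\delta(x)_\omega=x$, and $\sum_{\omega\in\Omega}\norm{\delta_\omega(x)}=\norm{\delta(x)}_{1}\leq(\alpha+1)\norm x$; replacing $\alpha$ by $\alpha+1$ yields $(3)$.

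I do not anticipate a genuine obstacle in this final statement: all the difficulty — the Open Mapping Theorem for cone-valued correspondences (Theorem~\ref{thm:openmappingtheorem}), the $\alpha$-boundedness of $\Delta$ (Theorem~\ref{thm:gen-klee-ando}(2)), the passage to lower hemicontinuity (Proposition~\ref{prop:additive-bounded-is-lower-hemicont} and Corollary~\ref{cor:upsilon-and-delta-arelower-hemo-cont}(2)), and the appeal to Michael's Selection Theorem (Theorem~\ref{thm:Michael's-Selection-Theorem}) — has already been carried out and is packaged inside Corollary~\ref{cor:raw-klee-ando}(2). The only points meriting a line of verification are the continuity and positive homogeneity of the coordinate maps $\delta_\omega$ (immediate from $\pi_\omega$ being a norm-nonincreasing linear map into $X$) and the bookkeeping that the defining relation $\sum_{\omega}\xi_\omega=x$ of $\Delta$ transfers to $\sum_{\omega}\delta_\omega(x)=x$.
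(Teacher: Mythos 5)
Your proposal is correct and follows essentially the same route as the paper: the implications $(3)\Rightarrow(2)\Rightarrow(1)$ are immediate, and $(1)\Rightarrow(3)$ is obtained by noting that $(1)$ makes $\Delta$ non-empty-valued, applying Corollary~\ref{cor:raw-klee-ando}(2) with $\varepsilon:=1$, and reading off the coordinate maps $\delta_{\omega}$ from the resulting selection. The paper leaves the coordinate-projection bookkeeping implicit, but your added verification is exactly what is intended.
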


	\bibliographystyle{amsplain}
	\bibliography{bib}
\end{document}